\newtheorem{proposition}{Proposition}[section]
\newtheorem{theorem}[proposition]{Theorem}
\newtheorem{lemma}[proposition]{Lemma}
\newtheorem{definition}[proposition]{Definition}
\newenvironment{proofof}[1]{\smallskip\noindent{\textbf{Proof~of~#1.}}%
  \hspace{1pt}}{\hspace{-5pt}{\nobreak\quad\nobreak\hfill\nobreak%
    $\square$\vspace{2pt}\par}\smallskip\goodbreak}
\newenvironment{proof}{\smallskip\noindent{\textbf{Proof.}}%
  \hspace{1pt}}{\hspace{-5pt}{\nobreak\quad\nobreak\hfill\nobreak%
    $\square$\vspace{2pt}\par}\smallskip\goodbreak}
\numberwithin{equation}{section}
\renewcommand{\phi}{\varphi}
\renewcommand{\theta}{\vartheta}
\renewcommand{\epsilon}{\varepsilon}
\renewcommand{\L}[1]{\mathbf{L^#1}}
\renewcommand{\d}[1]{\mathinner{\mathrm{d}{#1}}}
\newcommand{\C}[1]{\mathbf{C^{#1}}}
\renewcommand{\H}[1]{\mathbf{H^{#1}}}
\newcommand{\Cc}[1]{\mathbf{C_c^{#1}}}
\newcommand{\W}[2]{\mathbf{W^{#1,#2}}}
\newcommand{\BV}{\mathbf{BV}}
\newcommand{\modulo}[1]{{\left|#1\right|}}
\newcommand{\norma}[1]{{\left\|#1\right\|}}
\newcommand{\reali}{{\mathbb{R}}}
\newcommand{\naturali}{{\mathbb{N}}}
\DeclareMathOperator{\grad}{\nabla}
\DeclareMathOperator{\sign}{sign}
\DeclareMathOperator{\tr}{tr}
\DeclareMathOperator{\tv}{TV}
\let\div\relax
\DeclareMathOperator{\div}{\nabla\cdot}
\title{Modeling Crowd Dynamics through Hyperbolic -- Elliptic
  Equations}
\author{Rinaldo M.~Colombo$^1$, Maria Gokieli$^2$ and
  Massimiliano D.~Rosini$^3$}
\begin{document}

\begin{abstract}
  \noindent Inspired by the works of Hughes~\cite{Hughes2002,
    Hughes2003}, we formalize and prove the well posedness of a
  hyperbolic--elliptic system whose solutions describe the dynamics of
  a moving crowd. The resulting model is here shown to be well posed
  and the time of evacuation from a bounded environment is proved to
  be finite. This model also provides a microscopic description of the
  individuals' behaviors.
\end{abstract}

\begin{classification}
  Primary 35M11; Secondary 35L65, 35J60.
\end{classification}

\begin{keywords}
  Crowd dynamics; hyperbolic--elliptic systems.
\end{keywords}

\section{Introduction}

We consider the problem of describing how pedestrians exit an
environment. From a macroscopic point of view, we identify the crowd
through the pedestrians' density, say $\rho = \rho (t,x)$, and assume
that the crowd behavior is well described by the continuity equation
\begin{displaymath}
  \partial_t \rho + \div \left( \rho \; V (x, \rho)\right) = 0 \;,
  \qquad
  (t,x) \in \reali^+ \times \Omega\;,
\end{displaymath}
where $\Omega \subset \reali^2$ is the environment available to
pedestrians, $V = V (x,\rho) \in \reali^2$ is the velocity of the
individual at $x$, given the presence of the density $\rho$. Several
choices for the velocity function are available in the literature, see
for instance~\cite{borscheColomboGaravelloMeurer2015,
  BrunoTosinTricerriVenuti2011, ColomboGaravelloMercier2012,
  ColomboMercier2012, CristianiPriuliTosin2015, Hughes2002,
  Hughes2003, JiangZhouTian2015, GoatinTwarogowskaDuvigneau2014} for
velocities depending nonlocally on the density,
and~\cite[Section~4.1]{kachroo2009} for velocities depending locally
on the density. Here, we posit the following (local with respect to
the density) assumption:
\begin{displaymath}
  V (x,\rho) = v (\rho) \; w (x)
\end{displaymath}
where $v = v (\rho)$ is a smooth non-increasing scalar function,
motivated by the common attitude of moving faster when the density is
lower. A key role is played by $w = w (x)$: this vector identifies the
route followed by the individual at $x$. It is reasonable to assume
that the individual at $x$ follows the shortest path from $x$ towards
the nearest exit. This naturally suggests to choose $w$ parallel to
$\nabla \phi$, the potential $\phi$ being the solution to the eikonal
equation on $\Omega$. Extending the results
in~\cite{AmadoriGoatinRosini, 1d, ElKhatibGoatinRosini2013} obtained
in the 1-dimensional space to the 2-dimensional space, we consider the
following elliptic regularization of the eikonal equation:
\begin{displaymath}
  \norma{\grad \phi}^2 - \delta \; \Delta \phi = 1 \;,
  \qquad
  x\in\Omega\;,
\end{displaymath}
where $\delta$ is a fixed strictly positive parameter. Clearly, the
resulting vector field $\grad \! \phi$ depends only on $\Omega$,
namely only on the geometry of the environment available to the
pedestrians, i.e., on the positions of the exits, on the possible
presence of obstacles, and so on. We assume that the boundary
$\partial\Omega$ is partitioned in walls, say $\Gamma_w$, exits, say
$\Gamma_e$, and corners, say $\Gamma_c$; namely
$\partial \Omega = \Gamma_w \cup \Gamma_e \cup \Gamma_c$, the set
$\Gamma_e$, $\Gamma_w$, $\Gamma_c$ being two by two disjoint.
$\Gamma_c$ is a discrete subset of $\partial\Omega$. Also $\Gamma_e$
and $\Gamma_w$ are subsets of $\partial\Omega$ and they are open in
the topology they inherit from $\partial \Omega$. It is then natural
to choose $\phi$ as solution to the elliptic equation
\begin{equation}
  \label{eq:ell}
  \left\{
    \begin{array}{l@{\qquad}r@{\;}c@{\;}l}
      \norma{\grad \phi}^2 -  \delta \; \Delta \phi = 1
      & x & \in & \Omega
      \\
      \grad \phi (\xi) \cdot \nu (\xi) = 0
      & \xi & \in & \Gamma_w
      \\
      \phi (\xi) = 0
      & \xi & \in & \Gamma_e \;,
    \end{array}
  \right.
\end{equation}
$\nu (\xi)$ being the outward unit normal to $\partial\Omega$ at
$\xi$.  To select the direction $w (x)$ followed by the pedestrian at
$x$ we set
\begin{equation}
  \label{eq:state}
  w = \mathcal{N} (-\nabla\phi) \;,
\end{equation}
the map $\mathcal{N}$ being a regularized normalization, that is
\begin{equation}
  \label{eq:N}
  \mathcal{N} (x) = \frac{x}{\sqrt{\theta^2+\norma{x}^2}}\;,
\end{equation}
for a fixed strictly positive parameter $\theta$. Finally, the
evolution of the crowd density $\rho$ is then found solving the
following scalar conservation law:
\begin{equation}
  \label{eq:hyp}
  \left\{
    \begin{array}{l@{\qquad}r@{\;}c@{\;}l}
      \partial_t \rho + \div \left( \rho \; v (\rho) \; w (x) \right) = 0
      & (t,x) & \in & \reali^+ \times \Omega
      \\
      \rho (0,x) = \rho_o (x)
      & x & \in & \Omega
      \\
      \rho (t, \xi) = 0
      & (t, \xi) & \in & \reali^+ \times \partial\Omega \;,
    \end{array}
  \right.
\end{equation}
where $\rho_o$ is the initial crowd distribution. In other words, for
a given domain $\Omega$, from~\eqref{eq:ell} we obtain the vector
field $\nabla\phi$, that is used in~\eqref{eq:state} to define $w$ and
then from~\eqref{eq:hyp} we obtain how the pedestrians' density $\rho$
evolves in time starting from the initial density $\rho_o$.

Remark that the boundary condition $\rho (t, \xi) = 0$ has to be
understood in the sense of conservation laws,
see~\cite{BardosLerouxNedelec, ColomboRossi2015} and
Definition~\ref{def:21} below. Indeed, the choice in~\eqref{eq:hyp}
allows a positive outflow from $\Omega$ through $\Gamma_e$ thanks to
the definition of $w$, as proved in~\ref{prop:1.2} of
Proposition~\ref{prop:1}.

We prove below that the model consisting
of~\eqref{eq:ell}--\eqref{eq:state}--\eqref{eq:hyp} is well posed,
i.e., it admits a unique solution which is a continuous function of
the initial data. Moreover, we also ensure that the evacuation time is
finite.

Remark that the model~\eqref{eq:ell}--\eqref{eq:state}--\eqref{eq:hyp}
is completely defined by the physical domain $\Omega$, by the function
$v = v (\rho)$ and by the initial datum $\rho_o$, apart from the
regularizing parameters $\delta$ and $\theta$.

\smallskip

The next two sections are devoted to the detailed formulation of the
problem, to the statement of the well posedness result and of further
qualitative properties of the
model~\eqref{eq:ell}--\eqref{eq:state}--\eqref{eq:hyp}. All technical
details are gathered in Section~\ref{sec:TD}.

\section{Well Posedness}
\label{sec:Main}

Throughout, we denote $\reali^+ = \left[0, \infty\right[$. For
$x \in \reali^2$ and $r>0$, $B (x,r)$ stands for the open disk
centered at $x$ with radius $r$. For any measurable subset $S$ of
$\reali^2$, we denote by $\modulo{S}$ its 2-dimensional Lebesgue
measure. Recall that two (non-empty) subsets $A_1$, $A_2$ of
$\reali^2$ are \emph{separate} whenever
$\overline{A_1} \cap A_2 = \emptyset = A_1 \cap \overline{A_2}$.

A key role is played by the geometry of the domain $\Omega$. Here we
collect the conditions necessary in the sequel, see
Figure~\ref{fig:Omega}.

\begin{enumerate}[label={\textbf{($\mathbf\Omega$.\arabic*)}},
  leftmargin=*]
\item\label{omega1} $\Omega \subset \reali^2$ is non-empty, open,
  bounded and connected.
\item\label{omega2} The boundary $\partial\Omega$ admits the disjoint
  decomposition
  $\partial \Omega = \Gamma_w \cup \Gamma_e \cup \Gamma_c$, where
  $\Gamma_w$ and $\Gamma_e$ are separate and are finite union of open
  1-dimensional manifolds of class $\C{3,\gamma}$, for a given
  $\gamma \in \left]0, 1 \right[$; $\Gamma_e$ is non-empty;
  $\Gamma_c $ is a discrete finite set and
  $\overline{\Gamma_w} \cap \overline{\Gamma_e} \subseteq \Gamma_c
  \subseteq \overline{\Gamma_w}$.
\item\label{omega3} For any $x\in \Gamma_c$, there exists an
  $\epsilon > 0$ such that the intersection $B(x,\epsilon)\cap\Omega$
  is exactly a quadrant of the disk $B(x,\epsilon)$.
\end{enumerate}
\begin{figure}[!h]
  \centering
  \includegraphics[width=.5\textwidth]{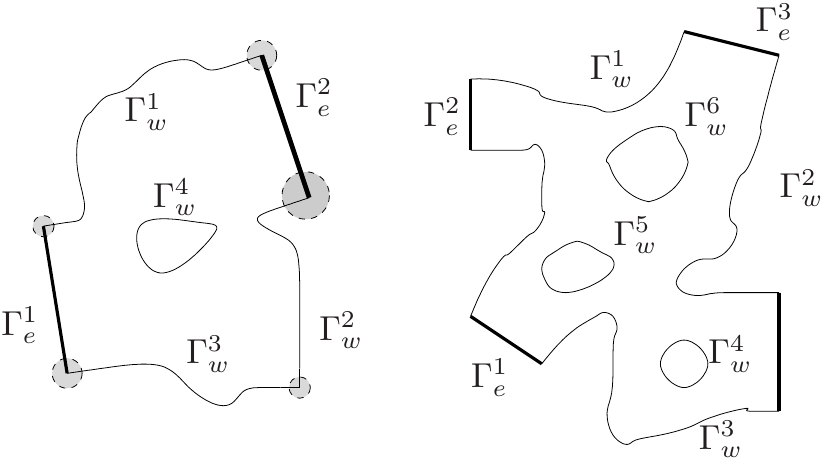}
  \caption{Two examples of sets $\Omega$ with the notation used
    in~\ref{omega2} and in~\ref{omega3}.\label{fig:Omega}}
\end{figure}

\noindent The requirement~\ref{omega1} is clear. In~\ref{omega2}, the
term \emph{open} has to be understood with respect to the topology
inherited by $\partial\Omega$. Again concerning~\ref{omega2},
introduce the connected components of $\Gamma_w, \Gamma_e$ and
$\Gamma_c$, i.e.,
\begin{displaymath}
  \Gamma_w = \bigcup_{i=1}^{n_w} \Gamma_w^i \;,
  \qquad
  \Gamma_e = \bigcup_{i=1}^{n_e} \Gamma_e^i \;,
  \quad \mbox{ and } \quad
  \Gamma_c = \bigcup_{i=1}^{n_c} \{J_i\} \;.
\end{displaymath}
Each of the $\Gamma_e^i$ is an exit, while the $J_i$ are points where
the regularity of $\partial\Omega$ is allowed to be
lower. Condition~\ref{omega2} implies that each $\Gamma_w^i$ and each
$\Gamma_e^i$ is a $\C{3,\gamma}$ manifold. Since
$\Gamma_c \subseteq \overline{\Gamma_w}$, along the boundary
$\partial\Omega$, between two different exits there is always a wall
or, in other words, there can not be two exits separated only by a
corner point. Condition~\ref{omega2} also implies that $n_e\geq 1$, so
that there is at least one exit. Moreover, apart from the trivial case
where $\partial \Omega = \Gamma_e$, the set $\Gamma_c$ may not be
empty. Note also that any corner point $J_i$ in $\Gamma_c$ is either a
doorjamb, if $J_i \in \overline{\Gamma_e}$, or a wall corner, if
$J_i \in (\overline{\Gamma_w} \setminus \overline{\Gamma_e})$.
Condition~\ref{omega3} says that the angles between each door and the
walls are right and convex, and additionally that these contain
straight segments. This is a technical assumption, related to the
subtle mixed boundary conditions: Dirichlet and Neumann conditions
meet at the doorjamb points. Condition~\ref{omega3} ensures the
regularity of solutions in a neighborhood of these points, a property
that might not hold for general angles.

Throughout, by \emph{solution} to~\eqref{eq:ell} we mean
\emph{generalized solution} in the sense of the following definition
(see~\cite[Chapters~8 and~13]{gilbarg}).

\begin{definition}
  \label{def:phi}
  Let $\Omega$ satisfy~\ref{omega1}. \!A function
  $\!\phi \!\in\! \H1\!(\Omega;\reali)\!$ is a \emph{generalized solution}
  to~\eqref{eq:ell} if $\tr_{\strut\Gamma_e} \!\!\!\phi \!=\! 0$ and
  \begin{displaymath}
    \delta\int_{\Omega}\nabla \phi (x) \cdot \nabla\eta (x) \d{x} +
    \int_{\Omega} \left(\norma{\nabla\phi (x)}^2 - 1\right) \, \eta (x)
    \d{x} = 0
  \end{displaymath}
  for any $\eta \in \H1(\Omega;\reali)$ such that
  $\tr_{\strut\Gamma_e} \eta = 0$.
\end{definition}

\noindent Above, $\tr_{\strut\Gamma_e} \eta$ denotes the trace of
$\eta$ on $\Gamma_e$. We refer to~\cite[Chapter~5.5]{Evans} for the
definition and properties of the trace operator.

Note that no generalized solution to~\eqref{eq:ell} can vanish a.e.~on
$\Omega$. The next proposition provides the basic existence result for
the solutions to~\eqref{eq:ell}, together with some qualitative
properties.

\begin{proposition}[Elliptic Problem]
  \label{prop:1}
  Let $\Omega$ satisfy~\ref{omega1}, \ref{omega2}, \ref{omega3}.  Fix
  $\delta>0$. Then, problem~\eqref{eq:ell} admits a unique generalized
  solution $\phi \in \C3 (\overline\Omega; \reali)$ with the
  properties:
  \begin{enumerate}[label={\bf(E.\arabic*)}, leftmargin=*]
  \item\label{prop:1.1} For a.e.~$x \in \Omega$,
    $\grad \phi (x) \neq 0$.
  \item\label{prop:1.2} For all $\xi \in \Gamma_e $,
    $- \grad\phi{(\xi{})} \cdot \nu{(\xi{})} > 0$.
  \item\label{prop:1.3}
    $\displaystyle \frac{\modulo{\Omega}}{\delta} \; \exp\left(
      -\frac{\max_{\partial\Omega}\phi}{\delta}\right) \leq
    -\int_{\Gamma_e} \grad\phi{(\xi)} \cdot \nu{(\xi)} \d\xi \leq
    \frac{\modulo{\Omega}}{\delta} \; \exp\left(
      \frac{\max_{\partial\Omega}\phi}{\delta}\right)$.
  \end{enumerate}
\end{proposition}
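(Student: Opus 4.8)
The plan is to split the statement into three parts: existence and uniqueness of a generalized solution, its $\C3$ regularity up to the boundary, and the three qualitative properties \ref{prop:1.1}--\ref{prop:1.3}. For existence and uniqueness, I would first perform the Hopf--Cole type substitution $\psi = \exp(-\phi/\delta)$ (equivalently $\phi = -\delta \log \psi$). A direct computation turns the quasilinear equation $\norma{\grad\phi}^2 - \delta\,\Delta\phi = 1$ into the \emph{linear} equation $-\delta^2\,\Delta\psi = -\psi$, i.e. $\Delta\psi = \psi/\delta^2$, on $\Omega$, with the boundary conditions transforming to the homogeneous Neumann condition $\grad\psi\cdot\nu = 0$ on $\Gamma_w$ and the Dirichlet condition $\psi = 1$ on $\Gamma_e$. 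This is a standard linear elliptic mixed boundary value problem with a coercive bilinear form (the zeroth-order coefficient $1/\delta^2$ has the favorable sign), so the Lax--Milgram theorem on the affine space $\{\psi \in \H1(\Omega) : \tr_{\Gamma_e}\psi = 1\}$ yields a unique weak solution. By the weak maximum principle $0 < \psi \le 1$ a.e. (so $\phi \ge 0$ and $\phi$ is well defined via the logarithm), which also shows any generalized solution $\phi$ of \eqref{eq:ell} satisfies $0 \le \phi \le \max_{\partial\Omega}\phi < \infty$; uniqueness for $\phi$ then follows from uniqueness for $\psi$.

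For the regularity $\phi \in \C3(\overline\Omega;\reali)$, the point is to upgrade the weak solution $\psi$ of the linear problem. Interior elliptic regularity (\cite[Chapter~8]{gilbarg}) gives $\psi \in \C\infty(\Omega)$. Near $\Gamma_w$ and near $\Gamma_e$, away from the corner set $\Gamma_c$, the $\C{3,\gamma}$ regularity of the boundary pieces together with boundary Schauder estimates for the Neumann, resp. Dirichlet, problem (\cite[Chapter~6]{gilbarg}) gives $\psi \in \C{3,\gamma}$ locally. The delicate points are the corners $J_i \in \Gamma_c$: this is exactly where assumption \ref{omega3} is used. Since $B(J_i,\epsilon)\cap\Omega$ is a quarter-disk bounded by two straight segments meeting at a right angle, with homogeneous Neumann data on one side and (for a doorjamb) Dirichlet data on the other, one can reflect: even reflection across the Neumann edge and odd reflection across the Dirichlet edge extend $\psi - 1$ (or $\psi$) to a solution of a linear elliptic equation on a full neighborhood (or half-disk) with $\C{3,\gamma}$ coefficients, so local Schauder regularity applies across $J_i$. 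The right-angle, straight-segment hypothesis in \ref{omega3} is precisely what makes these reflections produce an equation with the required smoothness of coefficients; for a general opening angle the reflected solution would only have limited regularity at the vertex. Patching the local estimates over the compact set $\overline\Omega$ gives $\psi \in \C3(\overline\Omega)$, and since $\psi$ is bounded away from $0$, $\phi = -\delta\log\psi \in \C3(\overline\Omega)$. I expect this corner analysis to be the main obstacle and the technically heaviest part — it is presumably deferred to Section~\ref{sec:TD}.

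For the qualitative properties, \ref{prop:1.1} follows by a unique continuation / Hopf argument: if $\grad\phi$ vanished on a set of positive measure, then on that set the equation forces $\Delta\phi = -1/\delta$, but more directly, in the $\psi$ variable $\grad\phi = -\delta\,\grad\psi/\psi$, so $\grad\phi(x) = 0$ iff $\grad\psi(x) = 0$; since $\psi$ is a nonconstant real-analytic (indeed $\C\infty$) solution of a second-order elliptic equation in the connected open set $\Omega$, its critical set has measure zero. For \ref{prop:1.2}, on $\Gamma_e$ we have $\psi = 1 = \max_{\overline\Omega}\psi$, so $\psi$ attains its maximum at every boundary exit point; since $\Delta\psi = \psi/\delta^2 > 0$, i.e. $\psi$ is subharmonic and nonconstant, the Hopf boundary point lemma gives $\grad\psi\cdot\nu(\xi) > 0$ at each such $\xi$ (the inward normal derivative is strictly negative), and translating back, $-\grad\phi(\xi)\cdot\nu(\xi) = \delta\,\grad\psi(\xi)\cdot\nu(\xi)/\psi(\xi) > 0$. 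For \ref{prop:1.3}, integrate the linear equation $\delta^2\Delta\psi = \psi$ over $\Omega$: the divergence theorem plus the Neumann condition on $\Gamma_w$ give $\delta^2\int_{\Gamma_e}\grad\psi\cdot\nu\,\d\xi = \int_\Omega \psi\,\d x$, and using $\grad\psi\cdot\nu = -(\psi/\delta)\,\grad\phi\cdot\nu = \psi/\delta \cdot (-\grad\phi\cdot\nu)$ on $\Gamma_e$ together with the pointwise bounds $\exp(-\max_{\partial\Omega}\phi/\delta) \le \psi \le 1$ on $\overline\Omega$ — and on $\Gamma_e$ in particular $\psi = 1$ while $\grad\psi\cdot\nu = (1/\delta)(-\grad\phi\cdot\nu)$ — one sandwiches $-\int_{\Gamma_e}\grad\phi\cdot\nu\,\d\xi = (\delta/1)\cdot(1/\delta^2)\int_\Omega\psi\,\d x$ between $(\modulo\Omega/\delta)\exp(-\max_{\partial\Omega}\phi/\delta)$ and $(\modulo\Omega/\delta)\exp(\max_{\partial\Omega}\phi/\delta)$, which is the claimed two-sided estimate.
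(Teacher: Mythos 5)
Your proposal follows the paper's route almost exactly: the Hopf--Cole substitution $u=e^{-\phi/\delta}$ reducing~\eqref{eq:ell} to the linear mixed problem~\eqref{eq:11}, Lax--Milgram for existence and uniqueness, maximum-principle and Hopf-lemma arguments for the sign conditions, a reflection argument at the points of $\Gamma_c$ for the $\C3(\overline\Omega;\reali)$ regularity, and integration of the linear equation over $\Omega$ for~\ref{prop:1.3}. Two steps, however, are not justified as written. First, the strict positivity of $\psi$ on $\overline\Omega$ --- which you need in the quantitative form $\inf_{\overline\Omega}\psi>0$ to conclude that $\phi=-\delta\log\psi$ lies in $\C3(\overline\Omega;\reali)$ and to run the lower bound in~\ref{prop:1.3} --- does not follow from ``the weak maximum principle'' alone for this mixed Dirichlet--Neumann problem. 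The variational argument with test function $\psi^-$ gives $\psi\ge0$ a.e., and the strong maximum principle gives $\psi>0$ inside $\Omega$, but to exclude a zero of $\psi$ on $\Gamma_w$ one must play the Hopf boundary-point lemma against the homogeneous Neumann condition, and at the corner points of $\Gamma_c$ the classical interior-ball version of that lemma is unavailable; the paper invokes Oddson's extension of Hopf's lemma to domains satisfying a cone condition precisely at this step. Second, the odd reflection of $\psi-1$ across the Dirichlet edge at a doorjamb does not produce a solution of the same equation: since $\Delta(\psi-1)=\delta^{-2}(\psi-1)+\delta^{-2}$ and the inhomogeneous term $\delta^{-2}$ is even, the odd extension solves $\Delta w=\delta^{-2}w+g$ with $g$ jumping sign across the reflection line, which yields only $\C{1,\alpha}$ regularity there, not $\C3$. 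Your parenthetical fallback --- even reflection across the Neumann edge only, landing on a half-neighborhood whose straight boundary through the corner carries the constant Dirichlet datum $1$, followed by boundary Schauder estimates --- is the correct route and is essentially what the paper does with its doubled rectangle and the uniqueness argument.

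On the other hand, your argument for~\ref{prop:1.1} via real analyticity (the critical set of a nonconstant real-analytic function on a connected open set has measure zero) is a clean and legitimate alternative to the paper's Sard-type argument; note that it is the analyticity of solutions of $\delta^2\Delta u=u$, not mere $\C\infty$ smoothness, that carries this step, so the parenthetical ``indeed $\C\infty$'' points in the wrong direction. The computation for~\ref{prop:1.3} is correct and coincides with the paper's.
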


\noindent The proof of the above proposition is postponed to
Section~\ref{sec:TD}. Here, we note that properties~\ref{prop:1.1},
\ref{prop:1.2} and~\ref{prop:1.3} have clear consequences on the
properties of the solutions to the full
system~\eqref{eq:ell}--\eqref{eq:state}--\eqref{eq:hyp}. Indeed,
setting $w$ as in~\eqref{eq:state}, property~\ref{prop:1.1} implies
that $w$ vanishes only on a set of measure $0$; \ref{prop:1.2} ensures
that $w$ is non zero and points outwards along exits; \ref{prop:1.3}
can be used to provide bounds on the evacuation time.

In the hyperbolic problem~\eqref{eq:hyp}, we use the following
assumptions, which are standard in the framework of conservation laws:
\begin{enumerate}[label={\textbf{(C.\arabic*)}},leftmargin=*]
\item\label{V} $v \in \C2 ([0,R_{\max}]; [0, V_{\max}])$ is weakly
  decreasing, $v(0) = V_{\max}$ and $v(R_{\max}) = 0$.
\item\label{C}
  $\rho_o \in (\BV \cap \L\infty) (\Omega; [0,R_{\max}])$.
\end{enumerate}
Above, $R_{\max}$, respectively $V_{\max}$, is the maximal density,
respectively speed, possibly reached by the pedestrians.

We recall also the definition of entropy solution to~\eqref{eq:hyp},
which originates in~\cite{Volpert}, see
also~\cite[p.~1028]{BardosLerouxNedelec}. Here, we refer
to~\cite[Definition~2.1]{ColomboRossi2015}.

\begin{definition}
  \label{def:21}
  Let the conditions~\ref{omega1}, \ref{omega2}, \ref{V} and~\ref{C}
  hold. Let $w \in \C2 (\overline{\Omega}; \overline{B}(0,1))$. A
  function
  $\rho \in (\L\infty \cap \BV) ([0,T] \times \Omega;[0,R_{\max}])$ is
  an \emph{entropy solution} to the initial -- boundary value
  problem~\eqref{eq:hyp} if for any test function
  $\zeta\in\Cc{2}( ]-\infty, T[\times \reali^2; \reali^+)$ and for any
  $k \in [0,R_{\max}]$
  \begin{eqnarray*}
    \int_0^T \int_\Omega
    \big\{
    \modulo{\rho(t,x)-k} \, \partial_t\zeta(t,x)
    +
    \sign \left( \rho(t,x) - k\right)
    \left(\rho(t,x) \, v\left(\rho(t,x)\right) - k \, v (k)\right) w(x)
    \cdot \grad\zeta(t,x)
    \big\} \d{x} \, \d{t}
    \\
    +
    \int_\Omega \modulo{\rho_o(x) - k} \; \zeta(0,x) \; \d{x}
    +
    \int_0^T \int_{\partial\Omega}
    \left(
    {\tr_{\strut\partial\Omega}\rho}(t,\xi) \,
    v\left({\tr_{\strut\partial\Omega}\rho}(t,\xi)\right) - k \, v(k)
    \right) w(\xi)
    \cdot \nu(\xi) \, \zeta(t,\xi) \, \d{\xi} \, \d{t}
    &\geq
    0 \,.
  \end{eqnarray*}
\end{definition}

\noindent As above, $\tr_{\strut\partial\Omega} u$ stands for the
operator trace at $\partial \Omega$ applied to the $\BV$ function $u$,
see for instance~\cite[\S~5.5]{Evans}
or~\cite[Appendix]{ColomboRossi2015}. Note that if the solution has
bounded total variation in time, it has a trace at $t=0+$.

\begin{proposition}[Hyperbolic Problem]
  \label{prop:3}
  Let the conditions~\ref{omega1}, \ref{omega2} and~\ref{V} hold. Let
  $w \in \C2 (\overline{\Omega}; \overline{B}(0,1))$. Then,
  problem~\eqref{eq:hyp} generates the map
  \begin{displaymath}
    \begin{array}{ccc@{}c@{}ccc}
      \mathcal{S} & \colon
      & \reali^+ & \times & (\L1\cap \BV) (\Omega; [0,R_{\max}]) & \to
      & (\L1\cap \BV)(\Omega; [0,R_{\max}])
      \\
                  & & t & , & \rho & \mapsto & \mathcal{S}_t \rho
    \end{array}
  \end{displaymath}
  with the following properties:
  \begin{enumerate}[label={\bf(H.\arabic*)}, leftmargin=*]
  \item\label{prop:3.1} $\mathcal{S}$ is a semigroup.
  \item\label{prop:3.2} $\mathcal{S}$ is Lipschitz continuous with
    respect to the $\L1$-norm, more precisely for any $s,t\in[0,T]$
    \begin{displaymath}
      \norma{
        \mathcal{S}_t\rho_o
        -
        \mathcal{S}_s\rho_o}_{\L\infty(\Omega;\reali)}
      \leq
      \left[ \sup_{\tau\in[s,t]} \tv(\mathcal{S}_\tau \rho_o) \right]
      \modulo{t-s} \;.
    \end{displaymath}
  \item\label{prop:3.3} For any $t \in [0,T]$
    \begin{align*}
          &\norma{\mathcal{S}_t\rho_o}_{\L\infty(\Omega;\reali)}
          \leq
               \norma{\rho_o}_{\L\infty(\Omega;\reali)} \,
               \exp(C_1 \, t) \;,
      &
      \tv\left(\mathcal{S}_t\rho_o\right)
          \leq
               C_2 \left(1+t+\tv(\rho_o)\right) \exp (C_2 \, t) \;,
    \end{align*}
    where the constants $C_1, C_2$ depend only on $R_{\max}$,
    $\norma{v'}_{\W2\infty ([0,R_{\max}];\reali)}$ and
    $\norma{w}_{\W2\infty (\Omega;\reali^2)}$.
  \item\label{prop:3.4} For any
    $\rho_o \in (\L1\cap \BV) (\Omega; [0,R_{\max}])$, the orbit
    $t \mapsto \mathcal{S}_t \rho_o$ is the unique solution
    to~\eqref{eq:hyp} in the sense of Definition~\ref{def:21}.
  \end{enumerate}
\end{proposition}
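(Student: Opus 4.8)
The plan is to deduce Proposition~\ref{prop:3} from the now classical theory of initial--boundary value problems for scalar conservation laws, in the $\BV$ framework of~\cite{ColomboRossi2015} (see also~\cite{BardosLerouxNedelec, Volpert}). Write the flux as $F(x,\rho) = \rho\, v(\rho)\, w(x)$. By~\ref{V} and since $w \in \C2(\overline\Omega; \overline{B}(0,1))$, the map $F$ belongs to $\C2(\overline\Omega \times [0,R_{\max}]; \reali^2)$ and $F$ together with all its first and second derivatives is bounded by a constant depending only on $R_{\max}$, $\norma{v'}_{\W2\infty([0,R_{\max}];\reali)}$ and $\norma{w}_{\W2\infty(\Omega;\reali^2)}$ (in particular $V_{\max} = v(0) = -\int_0^{R_{\max}} v'$ is itself controlled by these quantities). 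Moreover $\div F(x,\rho) = \bigl(v(\rho) + \rho\, v'(\rho)\bigr)\, w(x) \cdot \grad\rho + \rho\, v(\rho)\, \div w(x)$, so that~\eqref{eq:hyp} is a conservation law with smooth, bounded principal part and a bounded, Lipschitz ``source'' $s(x,\rho) = -\rho\, v(\rho)\, \div w(x)$ which, thanks to $v(0)=V_{\max}$ and $v(R_{\max})=0$, vanishes at $\rho = 0$ and at $\rho = R_{\max}$.

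First I would construct the solution by vanishing viscosity. Approximating $\rho_o$ by smooth functions and solving the uniformly parabolic problems $\partial_t\rho_\epsilon + \div F(x,\rho_\epsilon) = \epsilon\, \Delta\rho_\epsilon$ on $]0,T[\times\Omega$ with homogeneous Dirichlet data (the regularity of $\partial\Omega$ in~\ref{omega1}--\ref{omega2} suffices), classical theory gives smooth $\rho_\epsilon$, and the maximum principle — using that $s(x,\cdot)$ vanishes at the endpoints $0$ and $R_{\max}$ — yields $\rho_\epsilon(t,x) \in [0,R_{\max}]$. Then I turn to the a priori estimates. For the $\L\infty$ bound in~\ref{prop:3.3}, a Gronwall argument applied to $\sup_x \rho_\epsilon(t,\cdot)$ at its interior maximum points, where $\grad\rho_\epsilon = 0$ and $\Delta\rho_\epsilon \leq 0$, leaves only the lower order term and gives $\norma{\rho_\epsilon(t,\cdot)}_{\L\infty(\Omega;\reali)} \leq \norma{\rho_o}_{\L\infty(\Omega;\reali)}\exp(C_1 t)$ with $C_1$ as claimed; this survives the limit $\epsilon\to 0$. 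For the total variation bound in~\ref{prop:3.3}, differentiating the viscous equation in $x_i$, the vector $\grad\rho_\epsilon$ solves a linear parabolic system whose coefficients and right-hand side are controlled by the quantities above; integrating $\sum_i\modulo{\partial_{x_i}\rho_\epsilon}$ over $\Omega$, estimating the boundary integrals (which contribute a growth at most linear in $t$) and the lower order contributions (bounded by a constant times $\norma{\rho_\epsilon}_{\L\infty} + \tv(\rho_\epsilon)$), Gronwall's lemma yields $\tv(\rho_\epsilon(t,\cdot)) \leq C_2\bigl(1 + t + \tv(\rho_o)\bigr)\exp(C_2 t)$ uniformly in $\epsilon$. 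The same manipulations, together with $\partial_t\rho_\epsilon = -\div F(x,\rho_\epsilon) + \epsilon\, \Delta\rho_\epsilon$, bound the total variation of $\rho_\epsilon$ jointly in $(t,x)$; hence a subsequence converges in $\Lloc1$ to some $\rho \in (\L\infty\cap\BV)([0,T]\times\Omega;[0,R_{\max}])$, which by a standard passage to the limit in the entropy and boundary inequalities is an entropy solution to~\eqref{eq:hyp} in the sense of Definition~\ref{def:21}, attaining $\rho_o$ as trace at $t=0+$. This proves the existence half of~\ref{prop:3.4}, and~\ref{prop:3.2} follows from the bound on $\partial_t\rho$ as a measure, namely $\norma{\rho(t,\cdot)-\rho(s,\cdot)}_{\L1(\Omega;\reali)} \leq \int_s^t \norma{\div F(\cdot,\rho(\tau,\cdot))}_{\L1(\Omega;\reali)}\d\tau$, the integrand being controlled by a constant times $\tv(\rho(\tau,\cdot))$.

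Finally, uniqueness and stability. The $\L1$-stability estimate $\norma{\mathcal{S}_t\rho_o - \mathcal{S}_t\tilde\rho_o}_{\L1(\Omega;\reali)} \leq \norma{\rho_o - \tilde\rho_o}_{\L1(\Omega;\reali)}\exp(L\, t)$, with $L$ depending only on the quantities above, is obtained by the Kruzhkov doubling of variables adapted to initial--boundary value problems, exactly as in~\cite[Theorem~2.2]{ColomboRossi2015}: doubling both the space and the time variable, using the entropy inequality of Definition~\ref{def:21} together with the boundary entropy terms, and sending the doubling parameters to $0$. In particular the entropy solution is unique, so the orbit $t\mapsto \mathcal{S}_t\rho_o$ is single-valued and~\ref{prop:3.4} is complete; since $F$ does not depend on $t$, uniqueness gives $\mathcal{S}_{t+s} = \mathcal{S}_t\circ\mathcal{S}_s$, that is~\ref{prop:3.1}. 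The main obstacle is the total variation estimate in~\ref{prop:3.3}: one must simultaneously control the variation created by the genuinely non-conservative lower order term $\rho\, v(\rho)\, \div w$ and by the interaction of the solution with $\partial\Omega$, where the sign of $w\cdot\nu$ is not prescribed, while keeping all constants dependent only on $R_{\max}$, $\norma{v'}_{\W2\infty([0,R_{\max}];\reali)}$ and $\norma{w}_{\W2\infty(\Omega;\reali^2)}$; the careful bookkeeping of the boundary integrals arising when integrating the equation satisfied by $\grad\rho_\epsilon$ — cleanest on the parabolic approximation — is what produces the linear-in-$t$ correction in the bound.
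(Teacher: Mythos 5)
Your proposal is correct in outline, but it takes a genuinely different --- and much heavier --- route than the paper. The paper's proof consists solely of verifying the hypotheses of \cite[Theorem~2.7]{ColomboRossi2015} for the flux $f(t,x,\rho)=\rho\,v(\rho)\,w(x)$ (piecewise $\C{3,\gamma}$ boundary from~\ref{omega1}--\ref{omega2}, source $F\equiv 0$, boundary datum $\rho_b\equiv 0$, and the $\C2$ and $\W{1}\infty$/$\W{2}\infty$ bounds on $f$ coming from~\ref{V} and the hypothesis on $w$), and then reading off~\ref{prop:3.1}--\ref{prop:3.4} with the explicit constants of formulas~(2.5), (5.1) and~(6.44) of that reference. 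What you propose is, in effect, to re-prove that theorem from scratch: vanishing viscosity, invariance of $[0,R_{\max}]$, uniform $\L\infty$ and $\BV$ bounds, compactness, and doubling of variables for uniqueness, stability and the semigroup law. That is the standard and correct strategy, and your individual observations are sound: the zeroth-order term $\rho\,v(\rho)\,\div w$ vanishes at $\rho=0$ and $\rho=R_{\max}$, so $[0,R_{\max}]$ is invariant; autonomy of the flux plus uniqueness gives~\ref{prop:3.1}; and the time-Lipschitz estimate~\ref{prop:3.2} follows from bounding $\partial_t\rho$ as a measure by $\tv\left(\mathcal{S}_\tau\rho_o\right)$. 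The one place where your argument remains a sketch is precisely the crux you yourself flag: the uniform-in-$\epsilon$ total variation estimate up to $\partial\Omega$ for the parabolic approximation, with the $\left(1+t+\tv(\rho_o)\right)\exp(C_2 t)$ structure and constants depending only on $R_{\max}$, $\norma{v'}_{\W2\infty([0,R_{\max}];\reali)}$ and $\norma{w}_{\W2\infty(\Omega;\reali^2)}$. The boundary bookkeeping you allude to is the content of the whole of \cite[\S~6]{ColomboRossi2015}; as written, your proof delegates this step to a gesture, whereas the paper delegates it to the citation. Either is legitimate, but if you intend a self-contained argument you must actually carry out that estimate; otherwise it is both shorter and safer to do what the authors do, namely quote the theorem and only track how its constants $c_1$, $c_2$, $\mathcal{A}_1,\dots,\mathcal{A}_4$ depend on $q(\rho)=\rho\,v(\rho)$ and $w$.
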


\noindent The proof of the above proposition is deferred to
Section~\ref{sec:TD}, where it is shown that the above statements
follow from~\cite[Theorem~2.7]{ColomboRossi2015}.

We now give the definition of solution
to~\eqref{eq:ell}--\eqref{eq:state}--\eqref{eq:hyp}.

\begin{definition}\label{def:ellhyp}
  Let the assumptions~\ref{omega1}, \ref{omega2}, \ref{omega3},
  \ref{V} and~\ref{C} hold. The pair of functions
  $(\phi,\rho) \in \H1(\Omega;\reali) \times (\L\infty \cap \BV)
  ([0,T]\times\Omega;[0,R_{\max}])$
  solves the problem~\eqref{eq:ell}--\eqref{eq:state}--\eqref{eq:hyp} if
  $\phi$ is a generalized solution to~\eqref{eq:ell} in the sense of
  Definition~\ref{def:phi} and $\rho$ is an entropy solution
  to~\eqref{eq:hyp} in the sense of Definition~\ref{def:21} with $w$
  given by~\eqref{eq:state}.
\end{definition}

The next theorem ensures the well posedness of the
elliptic--hyperbolic
model~\eqref{eq:ell}--\eqref{eq:state}--\eqref{eq:hyp}.

\begin{theorem}[Mixed Problem]
  \label{thm:main}
  Let the conditions~\ref{omega1}, \ref{omega2}, \ref{omega3}, \ref{V}
  and~\ref{C} hold. For any $\delta, \theta>0$, the
  elliptic-hyperbolic
  problem~\eqref{eq:ell}--\eqref{eq:state}--\eqref{eq:hyp} generates a map
  \begin{displaymath}
    \begin{array}{ccc@{}c@{}ccc}
      \mathcal{M} & \colon
      & \reali^+ & \times & (\L1\cap \BV) (\Omega; [0,R_{\max}]) & \to
      & (\L1\cap \BV)(\Omega; [0,R_{\max}])
      \\
                  & & t & , & \rho & \mapsto & \mathcal{M}_t \rho
    \end{array}
  \end{displaymath}
  with the following properties:
  \begin{enumerate}[label={\bf(M.\arabic*)}, leftmargin=*]
  \item\label{thm:main.1} $\mathcal{M}$ is a semigroup.
  \item\label{thm:main.2} $\mathcal{M}$ is Lipschitz continuous with
    respect to the $\L1$-norm, more precisely for any $s, t \in [0,T]$
    \begin{displaymath}
      \norma{\mathcal{M}_t\rho_o -
        \mathcal{M}_s\rho_o}_{\L\infty(\Omega;\reali)} \leq
      \left[\sup_{\tau\in[s,t]}\tv(\mathcal{M}_\tau\rho_o)\right]
      \modulo{t-s} \;.
    \end{displaymath}
  \item\label{thm:main.3} For any $t \in [0,T]$ we have that
    $(\phi,\rho)=\mathcal{M}_t\rho_o$ satisfies
    \begin{align*}
      &\norma{\rho}_{\L\infty(\Omega;\reali)}
        \leq
               \norma{\rho_o}_{\L\infty(\Omega;\reali)} \,
               \exp (C_1 \, t) \;,
      &\tv\left(\rho\right)
        \leq 
               C_2 \left(1+t+\tv(\rho_o)\right) \exp(C_2 \, t) \;,
    \end{align*}
    where $C_1$ is a positive constant depending on
    $\norma{q}_{\W{1}\infty([0,R_{\max}];\reali)}$ and
    $\norma{w}_{\W{1}\infty(\Omega;\reali^2)}$, while the constant
    $C_2$ depends on $\norma{q}_{\W{2}\infty([0,R_{\max}];\reali)}$
    and $\norma{w}_{\W{2}\infty(\Omega;\reali^2)}$, where as usual we
    set $q (\rho) = \rho \, v (\rho)$.
  \item\label{thm:main.4} For all
    $\rho_o \in (\L1\cap \BV) (\Omega; [0,R_{\max}])$, the orbit
    $t \mapsto \mathcal{M}_t \rho_o$ is the unique solution
    to~\eqref{eq:ell}--\eqref{eq:state}--\eqref{eq:hyp} in the sense
    of Definition~\ref{def:ellhyp}.
  \end{enumerate}
\end{theorem}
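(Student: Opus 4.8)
The plan is to exploit the one-directional coupling in the model: equation~\eqref{eq:ell} involves only the geometry of $\Omega$, not $\rho$, so it is solved first and, once and for all, through Proposition~\ref{prop:1}; its solution enters~\eqref{eq:hyp} solely via the vector field $w$ defined in~\eqref{eq:state}. Accordingly, under~\ref{omega1}, \ref{omega2}, \ref{omega3} and for the fixed $\delta>0$, Proposition~\ref{prop:1} provides the unique generalized solution $\phi \in \C3(\overline\Omega;\reali)$ to~\eqref{eq:ell}, and we set $w = \mathcal{N}(-\nabla\phi)$.

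The only genuinely new verification is that this $w$ satisfies the hypothesis of Proposition~\ref{prop:3}, that is $w \in \C2(\overline\Omega;\overline{B}(0,1))$. Since $\theta>0$, the map $\mathcal{N}$ in~\eqref{eq:N} is $\C\infty$ on all of $\reali^2$, because $\sqrt{\theta^2 + \norma{x}^2} \ge \theta > 0$ never vanishes, and moreover $\norma{\mathcal{N}(x)}^2 = \norma{x}^2/(\theta^2 + \norma{x}^2) < 1$ for every $x \in \reali^2$. As $\phi \in \C3(\overline\Omega;\reali)$ yields $\nabla\phi \in \C2(\overline\Omega;\reali^2)$, the composition $w = \mathcal{N}(-\nabla\phi)$ lies in $\C2(\overline\Omega;B(0,1)) \subset \C2(\overline\Omega;\overline{B}(0,1))$, as needed; incidentally, \ref{prop:1.1} and~\ref{prop:1.2} make precise where and how $w$ vanishes and that it points outward along $\Gamma_e$.

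Next, with this fixed $w$, Proposition~\ref{prop:3} (valid under~\ref{omega1}, \ref{omega2}, \ref{V}) produces the semigroup $\mathcal{S}$ on $(\L1\cap\BV)(\Omega;[0,R_{\max}])$ enjoying~\ref{prop:3.1}--\ref{prop:3.4}. Define $\mathcal{M}_t\rho_o = (\phi, \mathcal{S}_t\rho_o)$ with the time-independent $\phi$ above. Then~\ref{thm:main.1} is~\ref{prop:3.1}, the constant first component not affecting the semigroup identity; \ref{thm:main.2} is~\ref{prop:3.2}; and the two bounds in~\ref{thm:main.3} are those in~\ref{prop:3.3}: writing $q(\rho)=\rho\,v(\rho)$ we have $q'=v+\rho\,v'$ and $q''=2v'+\rho\,v''$, so $\norma{q}_{\W1\infty}$, resp.\ $\norma{q}_{\W2\infty}$, is controlled by $R_{\max}$ and $\norma{v'}_{\W1\infty}$, resp.\ $\norma{v'}_{\W2\infty}$, and conversely; tracking this dependence in~\ref{prop:3.3} gives the constants $C_1, C_2$ as stated. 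Finally,~\ref{thm:main.4} follows from the uniqueness of $\phi$ in Proposition~\ref{prop:1} together with~\ref{prop:3.4}: for any solution $(\phi,\rho)$ in the sense of Definition~\ref{def:ellhyp}, $\phi$ must coincide with the elliptic solution, hence so does $w$, and then $\rho$ is forced to equal the orbit $t\mapsto\mathcal{S}_t\rho_o$.

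The hard analytic content is entirely hidden in the two quoted propositions, namely the $\C3$-regularity up to the boundary in Proposition~\ref{prop:1}, which crucially uses~\ref{omega3} to handle the Dirichlet--Neumann junctions at the doorjambs, and the $\BV$ well-posedness theory for initial--boundary value problems behind Proposition~\ref{prop:3}, i.e.~\cite[Theorem~2.7]{ColomboRossi2015}. Granting these, the present theorem is a routine assembly, and I do not expect a real obstacle beyond the regularity and boundedness of $w$ discussed above.
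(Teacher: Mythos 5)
Your proposal is correct and follows exactly the route the paper takes: the paper dispatches Theorem~\ref{thm:main} in one line as a direct consequence of Proposition~\ref{prop:1} and Proposition~\ref{prop:3}, and your write-up simply makes explicit the glue, namely that $\phi\in\C3(\overline\Omega;\reali)$ and the smoothness and boundedness of $\mathcal{N}$ give $w=\mathcal{N}(-\nabla\phi)\in\C2(\overline\Omega;\overline{B}(0,1))$ as required by Proposition~\ref{prop:3}, after which each of~\ref{thm:main.1}--\ref{thm:main.4} is the corresponding item of~\ref{prop:3.1}--\ref{prop:3.4} combined with the uniqueness of $\phi$. No gaps.
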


\noindent The above result is a direct consequence of
Proposition~\ref{prop:1} and Proposition~\ref{prop:3}.

\section{Qualitative Properties}
\label{sec:Qual}

Here, we aim at further qualitative properties of the solutions
to~\eqref{eq:ell}--\eqref{eq:state}--\eqref{eq:hyp} that have a
relevant meaning in the present setting.

Introduce for $\hat x \in \Omega$ the path $p_{\hat x}$ followed by
those pedestrians that are at $\hat x$ at time $t = 0$, i.e., the map
$p_{\hat x}$ is defined for $t \geq 0$ as the solution to the Cauchy
problem
\begin{equation}
  \label{eq:pat}
  \left\{
    \begin{array}{l}
      \dot x
      =
      w (x)
      \\
      x (0) = \hat x \;,
    \end{array}
  \right.
  \quad \mbox{ where } \quad
  w = \mathcal{N} \left(- \nabla \phi \right) \,.
\end{equation}
Above, $\mathcal{N}$ is defined in~\eqref{eq:N} and $\phi$ is the
solution to~\eqref{eq:ell}.

\begin{proposition}[Pedestrians' Trajectories]
  \label{prop:path}
  Let $\Omega$ satisfy~\ref{omega1}, \ref{omega2}, \ref{omega3} and
  call $\phi$ the solution to~\eqref{eq:ell} provided by
  Proposition~\ref{prop:1}. Then:
  \begin{enumerate}[label={\bf{(Q.\arabic*)}},leftmargin=*]
  \item\label{quark1} For any $\hat x \in \Omega$, there exists a
    unique globally defined path
    $p_{\hat x} \colon I_{\hat x} \to \reali^2$
    solving~\eqref{eq:pat}, $I_{\hat x}$ being a suitable non trivial
    real interval.
  \item\label{quark2} Any two paths either coincide or do not
    intersect, in the sense that for any $\hat x, \hat y \in \Omega$
    \begin{displaymath}
      p_{\hat x} (I_{\hat x}) \cap p_{\hat y} (I_{\hat y})
      \neq \emptyset \implies
      \left\{
        \begin{array}{lc}
          \mbox{either }
          & \hat x \in p_{\hat y} (I_{\hat y})
            \mbox{ and }
            p_{\hat x} (I_{\hat x}) \subseteq p_{\hat y} (I_{\hat y})
          \\
          \mbox{or }
          & \hat y \in p_{\hat x} (I_{\hat x})
            \mbox{ and }
            p_{\hat y} (I_{\hat y}) \subseteq p_{\hat x} (I_{\hat x}) \;.
        \end{array}
      \right.
    \end{displaymath}
  \item\label{quark3} There exist a subset
    $\hat \Omega \subset \Omega$ with $\modulo{\hat\Omega} = 0$ and a
    map $T \colon \Omega \setminus \hat\Omega \to \reali^+$ such that
    $I_{\hat x} = [0, T_{\hat x}]$ and
    $p_{\hat x} (T_{\hat x}) \in \Gamma_e$ for all
    $x \in \Omega \setminus \hat \Omega$.
  \end{enumerate}
\end{proposition}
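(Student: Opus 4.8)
The plan is to exploit the regularity of $\phi$ established in Proposition~\ref{prop:1}. By that proposition, $\phi \in \C3(\overline\Omega;\reali)$, so $-\nabla\phi \in \C2(\overline\Omega;\reali^2)$; since $\mathcal{N}$ in~\eqref{eq:N} is smooth on all of $\reali^2$ (the denominator $\sqrt{\theta^2+\norma{x}^2}$ is bounded below by $\theta>0$), the vector field $w = \mathcal{N}(-\nabla\phi)$ is of class $\C2(\overline\Omega;\overline B(0,1))$ and, in particular, Lipschitz. For~\ref{quark1}, I would first extend $w$ to a compactly supported $\C{1,1}$ (or Lipschitz) vector field $\tilde w$ on all of $\reali^2$, for instance by a standard Whitney-type extension or by multiplying a local extension by a cutoff; then the Cauchy--Lipschitz theorem gives, for every $\hat x$, a unique maximal solution of $\dot x = \tilde w(x)$, $x(0)=\hat x$, which I restrict to the maximal interval $I_{\hat x}$ on which it stays in $\overline\Omega$. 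Uniqueness for~\eqref{eq:pat} itself follows because any solution staying in $\overline\Omega$ is a solution of the extended equation, and the latter is unique. Global definedness forward in time follows from $\norma{w}\le 1$ (bounded speed) together with boundedness of $\Omega$: the trajectory cannot blow up, so it is defined until it first hits $\partial\Omega$, and the interval $I_{\hat x}$ is nontrivial because $\hat x$ is an interior point.

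For~\ref{quark2} I would argue by the uniqueness just obtained: if $p_{\hat x}(t_1) = p_{\hat y}(t_2) =: z$ for some $t_1 \in I_{\hat x}$, $t_2 \in I_{\hat y}$, then both $s \mapsto p_{\hat x}(t_1+s)$ and $s \mapsto p_{\hat y}(t_2+s)$ solve the same autonomous Cauchy problem through $z$, hence coincide on the common interval. Consequently one orbit is a time-translate of the other, and since both start at interior points at time $0$ and are followed forward until they reach $\Gamma_e$, one path is contained in the other; tracing which of $t_1, t_2$ is larger tells us whether $\hat x$ lies on $p_{\hat y}$ or vice versa. This is the dichotomy stated in~\ref{quark2}. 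The fact that the common terminal point lies on an exit, not on a wall or corner, will be justified in the next step.

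For~\ref{quark3}, the heart of the matter is to show that, for a.e.~$\hat x$, the forward trajectory actually reaches $\Gamma_e$ in finite time (rather than converging to an interior critical point of $\phi$, running along $\Gamma_w$ forever, or hitting a corner $J_i$). The key computation is $\frac{\d{}}{\d t}\phi\big(p_{\hat x}(t)\big) = \nabla\phi\cdot w = -\nabla\phi\cdot\mathcal{N}(-\nabla\phi) = -\,\norma{\nabla\phi}^2/\sqrt{\theta^2+\norma{\nabla\phi}^2} \le 0$, so $\phi$ is non-increasing along paths and strictly decreasing wherever $\nabla\phi \neq 0$, which by~\ref{prop:1.1} is a.e.~in $\Omega$. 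The set $\hat\Omega$ of "bad" initial points will be assembled from: (i) the measure-zero set where $\nabla\phi = 0$; (ii) the union of backward orbits of the (at most finitely many, by~\ref{omega2}) corner points $J_i$ — each such orbit is a Lipschitz curve, hence Lebesgue-null; and (iii) points whose forward orbit limits onto $\Gamma_w$ or onto a critical set without exiting. To exclude (iii) I would use the Neumann condition $\nabla\phi\cdot\nu = 0$ on $\Gamma_w$ together with $w = \mathcal{N}(-\nabla\phi)$ to see that $w$ is tangent to $\Gamma_w$, so a trajectory cannot cross a wall; and I would use a compactness/LaSalle argument: the $\omega$-limit set of a bounded trajectory is contained in $\{\nabla\phi = 0\}$, but near such points one uses~\ref{prop:1.2}, the strict sub/supersolution bounds~\ref{prop:1.3}, and the structure of solutions of~\eqref{eq:ell} (e.g.\ that $\phi$ has no interior local minima, by the maximum principle applied to $e^{-\phi/\delta}$, a trick that also underlies~\ref{prop:1.3}) to conclude that the only way for $\phi$ to stop decreasing is to have already reached $\Gamma_e$. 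Finiteness of $T_{\hat x}$ then follows: on the complement of a neighborhood of the critical set, $\norma{\nabla\phi}^2/\sqrt{\theta^2+\norma{\nabla\phi}^2}$ is bounded below by a positive constant, so $\phi$ decreases at a uniform rate and must hit the level $\phi = 0$, i.e.\ $\Gamma_e$, in time at most $\big(\max_{\overline\Omega}\phi\big)/c$.

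The main obstacle I anticipate is precisely step~\ref{quark3}(iii): ruling out trajectories that asymptote to the wall $\Gamma_w$ or to interior critical points without ever reaching an exit, and packaging the exceptional initial data into a genuinely null set $\hat\Omega$. The tangency of $w$ along $\Gamma_w$ handles transversal crossing, but the delicate point is the behavior near the doorjamb corners $J_i \in \overline{\Gamma_e}\cap\overline{\Gamma_w}$, where the mixed Dirichlet--Neumann data meet; here Condition~\ref{omega3} (right convex angles with straight segments) is exactly what guarantees enough regularity of $\phi$ near $J_i$ for the flow to be well behaved, and I would lean on it to show that only a null set of trajectories can funnel into each $J_i$. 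Everything else — existence, uniqueness, the non-crossing dichotomy, and finite exit time away from the bad set — is a fairly routine consequence of Cauchy--Lipschitz, the bound $\norma{w}\le 1$, and the monotonicity of $\phi$ along orbits.
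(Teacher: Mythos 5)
Your treatment of~\ref{quark1} and~\ref{quark2} matches the paper's: extend $w$ to a Lipschitz field on $\reali^2$ (the paper uses the $\C3$ extension of $\phi$ from~\cite[Lemma~6.37]{gilbarg} together with the Lipschitz bound~\eqref{eq:Nprop} on $\mathcal{N}$), apply Cauchy--Lipschitz, and deduce the non-crossing dichotomy from uniqueness of the autonomous flow. The Lyapunov computation, the LaSalle argument, and the tangency of $w$ along $\Gamma_w$ are also exactly the paper's steps.

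The genuine gap is in~\ref{quark3}, at the point you yourself flag as delicate but then resolve incorrectly. You assert that near a critical point one can use~\ref{prop:1.2}, \ref{prop:1.3} and the absence of interior local minima of $\phi$ ``to conclude that the only way for $\phi$ to stop decreasing is to have already reached $\Gamma_e$.'' That conclusion is false: the paper's Figure~\ref{fig:buridan} exhibits a domain in which $\hat\Omega$ is necessarily non-empty, because interior critical points of $\phi$ do attract trajectories --- $\phi$ decreases along such a trajectory to the critical value and the path never reaches an exit. What must be proved is not that no trajectory converges to the critical set, but that the set of initial data whose trajectories do so has two-dimensional measure zero, and ``$\bar x$ is not a local minimum of $\phi$'' is not enough for that: a degenerate non-minimum critical point could a priori attract a set of positive measure. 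The paper closes this by a local analysis at each $\bar x \in \mathcal{E}_{\overline\Omega}$: the linearization is $A(\bar x) = -\theta^{-1}D^2\phi(\bar x) = \frac{\delta}{\theta\,u(\bar x)}D^2u(\bar x)$, which by~\ref{it:u6} (a consequence of $\Delta u = u/\delta^2 > 0$, i.e.\ precisely your ``no local minimum'' observation, but used quantitatively on the Hessian) is symmetric with at least one strictly positive eigenvalue $\lambda_2$. When the other eigenvalue is nonzero, Hartman--Grobman shows $\bar x$ is a source or a saddle, attracting at most the one-dimensional stable manifold; when it vanishes, the paper invokes Palmer's linearization along the local center manifold to show that only points on a one-dimensional Lipschitz set $W$ can be attracted. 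Without this spectral/center-manifold step your argument does not produce a null exceptional set, and your subsequent ``uniform decay rate off a neighborhood of the critical set'' bound on $T_{\hat x}$ also does not follow, since a trajectory may linger arbitrarily long near a saddle it eventually leaves. (Finiteness of the exit time for the good trajectories is instead automatic: once a path is known not to be attracted by $\mathcal{E}_{\overline\Omega}$ and not to touch $\Gamma_w$, LaSalle forces it to leave $\Omega$ through $\Gamma_e$ at some finite time.)
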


\noindent The proof is deferred to Section~\ref{sec:TD}. In other
words, $T_{\hat x}$ is the time that the pedestrian leaving from point
$\hat x$ needs to reach the exit. Property~\ref{quark3} ensures that
this time is finite for a.e.~initial position $\hat x$.
\begin{figure}[!h]
  \centering
  \includegraphics[width=.23\textwidth]{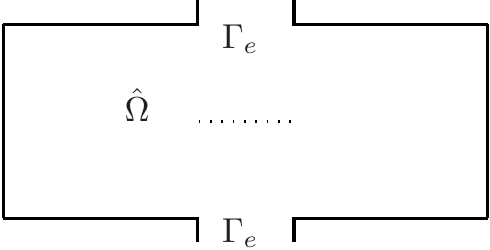}
  \caption{An example in which the set $\hat \Omega$ in
    Proposition~\ref{prop:path} is necessarily non empty. In the room
    above, due to the presence of the two exits $\Gamma_e$, the vector
    field $w$ vanishes along the dotted segment $\hat \Omega$.}
  \label{fig:buridan}
\end{figure}
Figure~\ref{fig:buridan} shows that the set $\hat \Omega$ may not be
avoided under the present assumptions.

\section{Technical Details}
\label{sec:TD}

We choose the following notation to denote a vector orthogonal to a
given vector in $\reali^2$:
\begin{displaymath}
  \mbox{if } v =
  \begin{bmatrix} v_1 \\ v_2 \end{bmatrix}
  \,, \quad \mbox{ then }
  v^\perp =
  \begin{bmatrix} -v_2\\ v_1 \end{bmatrix}
  \,.
\end{displaymath}
We frequently use the boundedness and Lipschitz continuity of the map
$\mathcal{N}$ as defined in~\eqref{eq:N}, namely
\begin{equation}
  \label{eq:Nprop}
  \begin{array}{rcll}
    \norma{{\cal N} (x)}
    & \leq
    & 1
    & \mbox{for all } x \in \reali^2\,,
    \\[2pt]
    \norma{{\cal N} (x_1) - {\cal N}(x_2)}
    & \leq
    & \theta^{-1} \; \norma{x_1 - x_2}
    &  \mbox{for all } x_1,x_2 \in \reali^2 \;.
  \end{array}
\end{equation}
The Hopf-Cole transformation (see e.g.~\cite[Chapter~4.4.1]{Evans})
\begin{equation}
  \label{eq:trans}
  u = e^{-\phi/\delta}
\end{equation}
transforms generalized solutions to~\eqref{eq:ell} into generalized
solutions to the \emph{linear} problem
\begin{equation}
  \label{eq:11}
  \left\{
    \begin{array}{l@{\qquad}r@{\;}c@{\;}l}
      u = \delta^2 \, \Delta u & x & \in & \Omega
      \\
      \grad u (\xi) \cdot \nu (\xi) = 0 & \xi & \in & \Gamma_w
      \\
      u (\xi) = 1 & \xi & \in & \Gamma_e \;,
    \end{array}
  \right.
\end{equation}
whse precise definition (see e.g.~\cite[Chapter 8]{gilbarg}) is here
below.

\begin{definition}
  \label{def:GenSol}
  A function $u\in \H1(\Omega;\reali)$ is a \emph{generalized
    solution} to~\eqref{eq:11} on $\Omega$ if
  $\tr_{\strut\Gamma_e} u \equiv 1$ and
  \begin{equation}
    \label{eq:EqUvar}
    \delta^2\int_{\Omega} \nabla u (x) \cdot \nabla\eta (x) \d{x}
    +
    \int_{\Omega} u (x) \, \eta (x) \d{x}
    =
    0
  \end{equation}
  for any $\eta \in \H1(\Omega;\reali)$ such that
  $\tr_{\Gamma_e}^{\,} \eta \equiv 0$.
\end{definition}

The next Lemma collects various information on~\eqref{eq:11}.

\begin{lemma}
  \label{lem:eu}
  Fix a positive $\delta$ and let $\Omega$ satisfy~\ref{omega1}
  and~\ref{omega2}. Then,
  \begin{enumerate}[label={\bf{(u.\arabic*)}}, leftmargin=*]
  \item\label{it:u1} Problem~\eqref{eq:11} admits a unique generalized
    solution $u \! \in \! (\H1 \cap \C\infty) (\Omega;\reali)$ in the
    sense of Definition~\ref{def:GenSol}. Moreover,
    $u \in \C3 (\overline{\Omega} \setminus \Gamma_c; \reali)$.
  \item\label{it:u2} There exists a positive $\varpi$ dependent only
    on $\Omega$ such that $u (x) \! \in \! \left]\varpi,1\right[$ for
    all $x \in \Omega$, so that $u (x) \in [\varpi,1]$ also for all
    $x \in \overline{\Omega}$.
  \item\label{it:u3} The solution $u$ to~\eqref{eq:11} satisfies
    $\nabla u(\xi)\cdot \nu (\xi) > 0$ for all $\xi \in \Gamma_e$.
  \item\label{it:u4} The set
    $\left\{x\in\Omega \colon \grad u (x) = 0 \right\}$ of critical
    points of $u$ has measure $0$.
  \end{enumerate}
  \noindent If in addition $\Omega$ satisfies~\ref{omega3}, then:
  \begin{enumerate}[label={\bf{(u.\arabic*)}}, leftmargin=*]
    \setcounter{enumi}{4}
  \item\label{it:u5} $u \in \C{3}(\overline\Omega;\reali)$.
  \item\label{it:u6} If $\bar{x}\in\overline{\Omega}$ is a critical
    point of $u$, then the Hessian matrix $D^2 u(\bar{x})$ has at
    least one positive eigenvalue.
  \end{enumerate}
\end{lemma}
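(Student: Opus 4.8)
The plan is to work entirely with the linear problem~\eqref{eq:11}, since by the Hopf--Cole transformation~\eqref{eq:trans} all information transfers back to~\eqref{eq:ell}. First I would establish~\ref{it:u1}: existence and uniqueness of a generalized solution $u\in\H1(\Omega;\reali)$ follow from the Lax--Milgram theorem applied to the bilinear form $a(u,\eta)=\delta^2\int_\Omega\nabla u\cdot\nabla\eta+\int_\Omega u\,\eta$ on the affine space $\{u\in\H1 : \tr_{\Gamma_e}u\equiv 1\}$; coercivity is immediate since $a(v,v)\geq\min\{\delta^2,1\}\norma{v}_{\H1}^2$. Interior smoothness $u\in\C\infty(\Omega;\reali)$ is classical elliptic regularity (the equation $u=\delta^2\Delta u$ has analytic coefficients, so in fact $u$ is analytic inside). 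Regularity up to the boundary away from $\Gamma_c$ is a local statement: near a point of $\Gamma_e$ one has a pure Dirichlet problem, near a point of $\Gamma_w$ a pure Neumann problem, and since $\Gamma_w,\Gamma_e$ are $\C{3,\gamma}$ manifolds, Schauder estimates (\cite[Chapters~6 and~8]{gilbarg}) give $u\in\C{3}$ up to that portion of the boundary; patching these local results yields $u\in\C3(\overline\Omega\setminus\Gamma_c;\reali)$.

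For~\ref{it:u2}, the bounds come from the maximum principle: since $u-\delta^2\Delta u=0$ with $u\equiv 1$ on $\Gamma_e$ and $\partial_\nu u=0$ on $\Gamma_w$, the strong maximum principle (plus Hopf's lemma at would-be interior extrema on $\Gamma_w$) forces $0<u<1$ in $\Omega$; the uniform lower bound $\varpi>0$ follows either from Harnack's inequality on a connected $\Omega$ or from a barrier argument using the distance to $\Gamma_e$, and then continuity up to $\overline\Omega$ gives $u\in[\varpi,1]$ everywhere. Property~\ref{it:u3} is exactly Hopf's boundary-point lemma applied at points of $\Gamma_e$: $u$ attains its maximum value $1$ there, $u<1$ inside, the domain satisfies an interior ball condition along the $\C{3,\gamma}$ manifold $\Gamma_e$, hence $\nabla u(\xi)\cdot\nu(\xi)>0$ strictly.

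Property~\ref{it:u4} is the one I expect to be the main obstacle, since it concerns the critical set $\{\nabla u=0\}$ and neither the maximum principle nor Schauder theory controls it directly. The natural route is to exploit that $u$ is real-analytic in $\Omega$ (again because $u=\delta^2\Delta u$ has analytic coefficients): a real-analytic function that is not identically zero has a zero set of measure zero, and applying this to each component of $\nabla u$ — or better, noting $\nabla u\not\equiv 0$ since $u$ is nonconstant by~\ref{it:u2} — shows $\{\nabla u=0\}=\bigcap_i\{\partial_{x_i}u=0\}$ is contained in the zero set of a nontrivial analytic function, hence has measure $0$. (One must first check $\nabla u$ does not vanish identically on any component, which is clear as $u$ is nonconstant and $\Omega$ connected.) Finally, for~\ref{it:u5} and~\ref{it:u6} we bring in~\ref{omega3}: near a doorjamb $J_i$, assumption~\ref{omega3} says $\Omega$ looks locally like a quarter-disk with straight sides meeting at a right angle, with Dirichlet data on one side and Neumann on the other. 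The standard trick is even reflection across the Neumann side followed by odd reflection across the Dirichlet side (or a suitable combination), which extends $u$ to a full neighborhood of $J_i$ as a solution of the same equation with $\C{3,\gamma}$ data; elliptic regularity then gives $u\in\C3$ near $J_i$, and combining with~\ref{it:u1} yields $u\in\C3(\overline\Omega;\reali)$. For~\ref{it:u6}, at a critical point $\bar x$ the equation reads $\delta^2\,\tr D^2u(\bar x)=u(\bar x)>0$ by~\ref{it:u2} (valid on $\overline\Omega$ thanks to~\ref{it:u5} and the reflection extension, so the PDE holds at boundary critical points too), hence the trace of the Hessian is strictly positive and at least one eigenvalue must be positive.
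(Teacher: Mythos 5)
Your proposal is correct and, for items~\ref{it:u1}, \ref{it:u2}, \ref{it:u3}, \ref{it:u5} and~\ref{it:u6}, follows essentially the same route as the paper: Lax--Milgram for existence and uniqueness, interior plus boundary elliptic regularity, the maximum principle and Hopf's boundary point lemma for $0<u<1$ and for~\ref{it:u3}, reflection across the straight sides provided by~\ref{omega3} at the corner points, and the identity $\lambda_1+\lambda_2=\Delta u(\bar x)=\delta^{-2}u(\bar x)>0$ for~\ref{it:u6}. The genuinely different step is~\ref{it:u4}: the paper splits the critical set according to whether $\det D^2u$ vanishes, handling the degenerate part via Sard's theorem applied to $\nabla u$ and the nondegenerate part by observing that such critical points are isolated in the compact $\overline{\Omega}$, hence finitely many; you instead use interior real-analyticity of solutions of $\delta^2\Delta u=u$ (constant, hence analytic, coefficients) and the fact that the zero set of a nontrivial real-analytic function on a connected open set is Lebesgue-null, applied to a component of $\nabla u$ that does not vanish identically. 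Your route is shorter and arguably more solid, since Sard's theorem controls the measure of the set of critical \emph{values} rather than of the critical points themselves, so the paper's degenerate case ultimately leans on precisely the kind of nondegeneracy of zero sets that you make explicit. Two small cautions. First, in~\ref{it:u2} the Hopf lemma must also be applied at the points of $\Gamma_c$, where the interior ball condition fails; the paper invokes Oddson's cone-condition version for this, and your sketch should do the same (the paper instead gets $u\le1$ by testing with $(u-1)^+$, which avoids the boundary altogether for the upper bound). Second, in~\ref{it:u5} only the even reflection across the Neumann side is needed and is the one that works: an odd reflection of $u-1$ across the Dirichlet side does not satisfy the same equation, because the zeroth-order term produces a right-hand side that changes sign across the reflecting line, so the extension would not be $\C3$ there; after the single even reflection the doorjamb becomes an interior point of a straight Dirichlet portion and standard boundary regularity applies, which is exactly the content of the paper's rectangle construction. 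Do not forget the wall corners in $\Gamma_c\setminus\overline{\Gamma_e}$, treated by the same reflection with Neumann data on both sides.
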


\begin{proof}
  Consider the different items above separately.

  \noindent$\star$~\ref{it:u1}: we use Lax--Milgram Lemma,
  see~\cite[Section~6.2.1]{Evans}. Introduce the Hilbert space
  $H = \{\eta \in \H1(\Omega;\reali ) \colon \tr_{\strut\Gamma_e} \eta
  = 0\mbox{ a.e.~on } \Gamma_e \}$
  endowed with the usual scalar product and the coercive bilinear form
  \begin{displaymath}
    a(u,\eta)
    =
    \delta^2 \int_{\Omega}\nabla u (x)\cdot \nabla\eta (x) \d{x}
    +
    \int_{\Omega}u (x)\; \eta (x) \d{x} \;.
  \end{displaymath}
  Note that $H$ is a closed subspace of $\H1(\Omega;\reali)$ by the
  Trace Theorem~\cite[Chapter~5.5, Theorem~1]{Evans}. Indeed, if $u^k$
  is a sequence in $H$ converging to $u$ in $\H1(\Omega; \reali)$,
  then
  \begin{displaymath}
    \norma{u}_{\L2(\Gamma_e;\reali)}
    =
    \norma{u^k-u}_{\L2(\Gamma_e;\reali)}
    \leq
    C \, \norma{u^k-u}_{\H1(\Omega;\reali)} \to 0 \;,
  \end{displaymath}
  for a constant $C$ depending only on $\Omega$, so that $u \in H$. A
  function $u \in \H1 (\Omega;\reali)$ is a generalized solution
  to~\eqref{eq:11} if and only if $v = u-1\in H$ and
  $a(v,\eta) = - \int_\Omega \eta(x) \d{x}$ for all $\eta\in H$. The
  map $\eta \mapsto \int_\Omega \eta(x) \d{x}$ is a linear functional
  over $H$. By Lax--Milgram Lemma, we infer the existence and
  uniqueness of a generalized solution $u$ to~\eqref{eq:11} such that
  $u \in H \subset \H1 (\Omega;\reali)$.  Moreover,
  $u \in \C\infty(\Omega;\reali)$ by~\cite[Theorem~3 in Chapter~6.3
  and Theorem~6 in Section~5.6.3]{Evans}.  By~\ref{omega1}
  and~\ref{omega2}, the results in~\cite[Theorem~9.3]{agmon} ensure
  that $u \in \C3 (\overline{\Omega} \setminus \Gamma_c; \reali)$.

  \smallskip

  \noindent$\star$~\ref{it:u2}: note that, due to the boundary
  conditions along $\Gamma_e$ and $\Gamma_w$, no $\H1$ solution
  to~\eqref{eq:11} can be constant. The function $\eta = (u-1)^+$,
  where $(v)^+ = \max(v,0)$, is in $\H1 (\Omega;\reali)$ and inserting
  it in~\eqref{eq:EqUvar} we get
  \begin{displaymath}
    \delta^2 \int_{\Omega} \norma{\nabla( u-1)^+}^2 + \int_{\Omega}
    \modulo{(u-1)^+}^2 + \int_{\Omega} (u-1)^+ = 0 \;.
  \end{displaymath}
  This leads to $(u-1)^+ \equiv 0$ a.e.~in $\Omega$, and, by the
  continuity of $u$ on $\overline{\Omega}$, $u (x) \leq 1$ for all
  $x \in \overline{\Omega}$.  The map $u$ satisfies~\eqref{eq:11} in
  the strong sense everywhere in $\Omega$. Hence, by the maximum
  principle~\cite[Chapter~2, Theorem~6]{protter} $u (x) < 1$ for all
  $x \in \Omega$.

  We show now that $u>0$. As $u$ is continuous in $\overline{\Omega}$,
  it attains its minimum. Assume, by contradiction, that
  $\min_{\overline{\Omega}} u = -m$ for some $m \geq 0$.  Then, by
  applying the maximum principle to $-u$, we know that there exists
  $\xi \in \partial\Omega$ such that $u(\xi) = -m$.  We apply now
  Hopf's Lemma, more precisely its extension from~\cite{oddson} to
  domains satisfying the cone condition (instead of the ball condition
  as in the original work by Hopf, see e.g.~\cite[Theorem~8 in
  Chapter~2]{protter}), which implies that the normal derivative of
  $u$ at $\xi$ is positive, contradicting~\eqref{eq:11}.

  \smallskip

  \noindent$\star$~\ref{it:u3}: is an immediate consequence
  of~\ref{it:u2}, due to the boundary conditions in~\eqref{eq:11}.

  \smallskip

  \noindent$\star$~\ref{it:u4}: denote by $D^2u$ the Hessian matrix of
  $u$ and note that
  \[
      \bigl\{x \in \Omega \colon \nabla u (x)=0\bigr\}
      =
            \bigl\{
            x \in \Omega \colon \nabla u (x)=0 \,\mbox{and}\, \det D^2u (x) = 0
            \bigr\}
          \cup
          \bigl\{
          x \in \Omega \colon \nabla u (x)=0 \,\mbox{and}\, \det D^2u (x) \neq 0
          \bigr\} .
  \]
  The former set has 2-dimensional measure zero by Sard
  Theorem~\cite{sard} applied to $\nabla u$. The latter set consists
  of isolated points all belonging to the compact set
  $\overline{\Omega}$, hence it is finite. Therefore,
  $\modulo{\{x \in \Omega \colon \nabla u (x)=0\}}=0$.

  \smallskip

  \noindent$\star$~\ref{it:u5}: we verify that $u$ is $\C3$ at the
  points in $\Gamma_c$ under condition~\ref{omega3}. To this aim, we
  adapt the arguments in~\cite[Proof of Theorem~3.1]{volkov}, there
  applied to Poisson equation.

  Fix $x_o \in \Gamma_c \cap \overline{\Gamma_e}$, i.e., $x_o$ is a
  doorjamb.  Let $\epsilon$ be as in~\ref{omega3}, call
  $\ell = \epsilon / 2$ and choose $x_1 \in \Gamma_e \cap B(x_o,\ell)$
  with $x_1\neq x_o$.  Let $\nu$ be a unit vector such that
  $\nu \cdot (x_1 - x_o) = 0$ and pointing outward $\Omega$ at
  $x_1$. Define $x_2 = x_1 - \ell \; \nu$ and
  $x_3 = x_o - \ell \; \nu$.  Call $R$ the open rectangle with
  vertexes $x_o$, $x_1$, $x_2$, $x_3$, denote by $x_i\,x_j$ the open
  segment
  \begin{displaymath}
    x_i\,x_j
    =
    \left\{
      x\in\reali^2 \colon
      x = (1-\theta) \; x_i+\theta \; x_j, ~ \theta\in \left]0,1\right[
    \right\}
  \end{displaymath}
  and by $\mathfrak{S}$ the symmetry about the straight line including
  $x_ox_3$ and $R' = \mathfrak{S}(R)$. Define the rectangle
  $\mathcal{R} = R \cup x_ox_3 \cup R'$
  and consider the problem\\
  \begin{minipage}[b]{0.7\linewidth}
    \begin{displaymath}
      \left\{
        \begin{array}{l@{\qquad}r@{\;}c@{\;}l}
          - \delta^2 \; \Delta w(x) + w(x) = 0
          & x & \in & \mathcal{R}
          \\
          w (\xi) = 1
          & \xi & \in & x_ox_1 \cup \mathfrak{S}(x_ox_1)
          \\
          w (\xi) = u(\xi)
          & \xi & \in & x_1x_2\cup x_2x_3
          \\
          w (\xi) = w\left(\mathfrak{S}(\xi)\right)
          & \xi & \in & \mathfrak{S}(x_1x_2\cup x_2x_3) \;.
        \end{array}
      \right.
    \end{displaymath}
  \end{minipage}%
  \begin{minipage}{0.23\linewidth}
    \begin{center}
      \includegraphics[width=\linewidth]{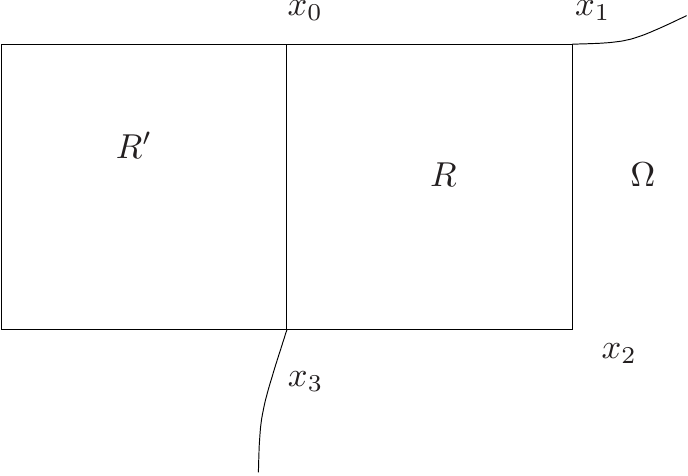}
    \end{center}
  \end{minipage}

  \noindent Note that the boundary condition is of class $\C\infty$ by
  the regularity of $u$ proved above. Lax--Milgram Lemma ensures that
  the function $w$ exists, is unique and is in
  $\C\infty (\mathcal{R}; \reali)$.  By construction, $w$ is symmetric
  with respect to the straight line $x_o + \reali \, \nu$, in the
  sense that
  \begin{displaymath}
    w (x) =  w\left(\mathfrak{S}(x)\right)
    \quad \mbox{ for all } \quad
    x \in \mathcal{R}.
  \end{displaymath}
  This in turn implies that
  \begin{displaymath}
    \nabla w (\xi) \cdot \nu (\xi) = 0
    \quad \mbox{ for all } \quad
    x \in x_ox_3 \;.
  \end{displaymath}
  Due to the $\C\infty$ regularity of the boundary of $\mathcal{R}$ at
  $x_o$, $w$ is of class $\C\infty$ in a neighborhood of $x_o$. By
  uniqueness, $w = u$ on $\overline{\mathcal{R}}$. Hence, $u$ is of
  class $\C\infty$ also in a neighborhood of $x_o$ restricted to
  $\Omega$.

  If $x_o \in (\Gamma_c \setminus \overline{\Gamma_e})$, to prove the
  regularity of $u$ at $x_o$ we proceed as above, simply replacing the
  Dirichlet condition on $x_ox_1$ by a homogeneous Neumann one,
  applying again Lax-Milgram Lemma and concluding by symmetry and
  uniqueness.

  \smallskip

  \noindent$\star$~\ref{it:u6}: the characteristic equation
  $\det \left(D^2 u(\bar{x}) - \lambda I \right) = 0$ in the case of a
  2-dimensional problem is a quadratic equation with real solutions
  $\lambda_1(\bar{x})$, $\lambda_2(\bar{x})$ satisfying
  \begin{displaymath}
    \lambda_1(\bar{x}) \; \lambda_2(\bar{x}) = \det D^2 u(\bar{x}) \;,
    \qquad
    \lambda_1(\bar{x}) + \lambda_2(\bar{x}) = \Delta u(\bar{x}) \;.
  \end{displaymath}
  Note that by the $\C2$ regularity of $u$ proved at~\ref{it:u1}, the
  equation $u = \delta^2\Delta u$ is satisfied in whole
  $\overline{\Omega}$.  By~\ref{it:u2},
  $\lambda_1(\bar{x}) + \lambda_2(\bar{x}) = \delta^{-2} \, u(\bar{x})
  > 0$,
  so that at least one of the eigenvalues has to be (strictly)
  positive.
\end{proof}

\begin{proofof}{Proposition~\ref{prop:1}}
  By~\eqref{eq:trans} and straightforward computations it is clear
  that~\eqref{eq:ell} has a solution if and only if~\eqref{eq:11} has
  a solution which is positive a.e.~in $\Omega$.  Point~\ref{it:u1} in
  Lemma~\ref{lem:eu} ensures the existence and uniqueness of a
  solution to~\eqref{eq:11}. Moreover, by~\ref{it:u2} in
  Lemma~\ref{lem:eu} this solution is strictly positive a.e.~in
  $\Omega$. This allows to define $\phi = -\delta \ln u$. The
  remaining regularity statements and~\ref{prop:1.1} follow again from
  Lemma~\ref{lem:eu} by~\eqref{eq:trans}. So as to
  obtain~\ref{prop:1.2}, note first that
  $-\grad\phi \cdot \nu = \frac{\delta}{u} \, \grad u \cdot \nu > 0$
  everywhere on $\Gamma_e$ by~\eqref{eq:trans} and~\ref{it:u3} in
  Lemma~\ref{lem:eu}. Then, integrate~\eqref{eq:11} on~$\Omega$, use
  Green Theorem and again Lemma~\ref{lem:eu} to obtain~\ref{prop:1.3}.
\end{proofof}

\begin{proofof}{Proposition~\ref{prop:3}}
  The present proof follows from~\cite[Theorem~2.7]{ColomboRossi2015}.
  Indeed, referring to the notation therein, we define
  $q (\rho) = \rho \, v (\rho)$ and verify the necessary assumptions.
  \begin{description}
  \item[($\mathbf{\Omega_{3,\gamma}}$)] $\Omega$ is a bounded open
    subset of $\reali^2$ with piecewise $\C{3,\gamma}$ boundary
    $\partial\Omega$ by~\ref{omega1} and~\ref{omega2}.
  \item[(F)] This condition is immediate since in the present case we
    have $F\equiv0$.
  \item[(f)] In our case $f (t,x,\rho) = \rho \, v (\rho) \, w (x)$.
    By~\ref{V} and the assumption that $w$ is in
    $(\C2 \cap \W1\infty) \left(\reali;B(0,1)\right)$, we have that
    $f$ is of class $\C2$ and moreover
    \begin{align*}
      &\partial_\rho f (t,x,\rho)
       =
       q' (\rho) \, w (x) \;,&
      & \partial^2_{\rho\rho} f (t,x,\rho)
       =
       q'' (\rho) \, w (x) \;,&
      & \partial_\rho \div f (t,x,\rho)
       =
       q' (\rho \,) \div w (x)
    \end{align*}
    are all functions of class $\L\infty$ on
    $\reali^+ \times \Omega \times [0,R_{\max}]$.
  \item[(C)] This condition follows from~\ref{C} because in the
    present case $\rho_b\equiv0$.
  \end{description}
  We then obtain
  \begin{displaymath}
    \begin{array}{@{}r@{\,}c@{\,}l@{\quad}l@{}}
      \norma{\mathcal{S}_t\rho_o}_{\L\infty(\Omega;\reali)}
      & \leq
      & \left( \norma{\rho_o}_{\L\infty(\Omega;\reali)} + c_2 \, t \right)
        \exp(c_1 \, t)
      & \mbox{by~\cite[Formula~(2.5)]{ColomboRossi2015}}
      \\
      \tv(\mathcal{S}_t\rho_o)
      & \leq
      & \left(
        \mathcal{A}_1 + \mathcal{A}_2 \, t + \mathcal{A}_3 \, \tv(\rho_o)
        \right)
        \exp(\mathcal{A}_4\,t)
      & \mbox{by~\cite[Formula~(6.44)]{ColomboRossi2015}}
    \end{array}
  \end{displaymath}
  where, with reference to~\cite[Formula~(5.1)]{ColomboRossi2015}
  and~\cite[\S~6]{ColomboRossi2015}, the constants $c_1$, $c_2$,
  $\mathcal{A}_1$, $\ldots$, $\mathcal{A}_4$ are estimated as follows:
  \begin{align*}
    &c_1
     =
     1
      +
      \norma{q'}_{\L\infty([0,R_{\max}];\reali)} \, \norma{\nabla\cdot w}_{\L\infty(\Omega;\reali)}
    \leq
      1
      +
      \norma{q}_{\W1\infty([0,R_{\max}];\reali)} \,
      \norma{w}_{\W1\infty(\Omega;\reali)} \;,
    \\
    &c_2
     =
     0 \;,
    \\
    &\mathcal{A}_1
     =
     \mathcal{O}(1) \, \norma{D f)}_{\L\infty(\Omega\times[0,R_{\max}];\reali^{n\times(1+n)})}
     \leq
     \mathcal{O}(1) \, \norma{q}_{\W1\infty([0,R_{\max}];\reali)} \,
      \norma{w}_{\W1\infty(\Omega;\reali^n)} \;,
    \\
    &\mathcal{A}_2
     =
     \mathcal{O}(1) \,
     \norma{D f}_{\W1\infty(\Omega\times[0,R_{\max}];\reali^{n\times(1+n)})}
      \leq
      \mathcal{O}(1) \, \norma{q}_{\W2\infty([0,R_{\max}];\reali)} \,
      \norma{w}_{\W2\infty(\Omega;\reali^n)} \;,
    \\
    &\mathcal{A}_3
     =
     \mathcal{O}(1)
      +
      \norma{q'}_{\L\infty([0,R_{\max}];\reali)} \, \norma{w}_{\L\infty(\Omega;\reali^n)}
     \leq
     \mathcal{O}(1)
      +
      \norma{q}_{\W1\infty([0,R_{\max}];\reali)} \,
      \norma{w}_{\L\infty(\Omega;\reali^n)} \;,
    \\
    &\mathcal{A}_4
     =
     \mathcal{O}(1)
      \left[
      1
      +
      \norma{D f}_{\W1\infty(\Omega\times[0,R_{\max}];\reali^{n\times (1+n)})}
      \right]
     \leq
     \mathcal{O}(1)
      \left[
      1
      +
      \norma{q}_{\W2\infty([0,R_{\max}];\reali)} \, \norma{w}_{\W2\infty(\Omega;\reali^n)}
      \right]
  \end{align*}
  and the above norms of $q$ are bounded by~\ref{V} and by the adopted
  assumption on $w$.
\end{proofof}

For technical reasons, below we fix an arbitrary open subset $\Omega'$
of $\reali^2$ containing $\overline{\Omega}$ and extend the unique
generalized solution $\phi \in \C3 (\overline\Omega; \reali)$
of~\eqref{eq:ell} given in Proposition~\ref{prop:1} introducing a map
$\widetilde{\phi} \in \Cc3\left(\reali^2; \reali \right)$ such that
$\widetilde{\phi} \equiv \phi$ in $\Omega$ and
$\widetilde{\phi} \equiv 0$ in $\reali^2 \setminus \Omega'$.  This is
possible thanks to the regularity of $\phi$ and to the following
result.

\begin{lemma}[{\cite[Lemma~6.37]{gilbarg}}]
  \label{lem:GT}
  Let $\Omega$ satisfy~\ref{omega1}, \ref{omega2}, \ref{omega3}. For
  any open subset $\Omega'$ of $\reali^2$ such that
  $\overline{\Omega} \subset \Omega'$, there exists a constant $C$
  such that for any $f \in \C3 (\Omega; \reali)$, there exists a map
  $\widetilde{f} \in \Cc3 (\reali^2; \reali)$ with
  \begin{displaymath}
    \widetilde{f} (x) =
    \left\{
      \begin{array}{l@{\quad\mbox{for all }x \in\;}l}
        f (x) & \Omega
        \\
        0 & \reali^2 \setminus \Omega'
      \end{array}
    \right.
    \quad\mbox{ and }\quad
    \norma{\widetilde{f}}_{\C3
      (\reali^2; \reali)} \leq C \, \norma{f}_{\C3
      (\overline{\Omega}; \reali)} \;.
  \end{displaymath}
\end{lemma}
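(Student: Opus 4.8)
The statement is, up to the presence of the corner points $\Gamma_c$, exactly \cite[Lemma~6.37]{gilbarg}: the boundary of $\Omega$ is of class $\C{3,\gamma}$ only away from $\Gamma_c$, so the plan is to recall the classical partition-of-unity construction and to explain how \ref{omega3} disposes of the finitely many corners. First I would fix, once and for all (they will depend only on $\Omega$ and $\Omega'$), a finite open cover $\{U_i\}_{i=0}^N$ of $\overline\Omega$ with a subordinate partition of unity $\{\psi_i\}\subset\Cc\infty(\reali^2;[0,1])$ satisfying $\sum_i\psi_i\equiv 1$ on a neighbourhood $W$ of $\overline\Omega$ with $\overline W\subset\Omega'$, together with a cutoff $\chi\in\Cc\infty(\Omega';[0,1])$ with $\chi\equiv 1$ on $W$. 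The cover is chosen so that $\overline{U_0}\subset\Omega$; each $U_i$ with $U_i\cap\Gamma_c=\emptyset$ is carried by a $\C{3,\gamma}$ change of variables $\Phi_i$ (available from the manifold regularity in \ref{omega2}) onto a neighbourhood of $0$ in $\reali^2$ with $\Phi_i(U_i\cap\Omega)$ lying in the half-plane $\{y_2>0\}$; and each $U_i$ meeting $\Gamma_c$ is, after a rigid motion $\Phi_i$, a disk $B(J_i,\epsilon)$ with $\Phi_i(U_i\cap\Omega)=B(0,\epsilon)\cap\{y_1>0,\ y_2>0\}$, which is exactly what \ref{omega3} provides.

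Next come the local extensions. On the half-plane I would use the higher-order reflection: for $g\in\C3(\{y_2\geq 0\};\reali)$ set $(Eg)(y_1,y_2)=g(y_1,y_2)$ for $y_2\geq 0$ and $(Eg)(y_1,y_2)=\sum_{j=1}^{4}c_j\,g(y_1,-y_2/j)$ for $y_2<0$, where $c_1,\dots,c_4$ solve the Vandermonde system $\sum_{j=1}^4 c_j(-1/j)^k=1$ for $k=0,1,2,3$; this makes $Eg\in\C3$ across $\{y_2=0\}$ with $\norma{Eg}_{\C3}\leq c\,\norma{g}_{\C3}$. For the quadrant one applies the analogous one-dimensional reflection first in $y_1$ and then in $y_2$: since the two operations act on different variables they compose to a bounded operator $\C3\to\C3$ producing a $\C3$ extension onto a full neighbourhood of the corner. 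Pulling back through $\Phi_i^{-1}$ — a $\C{3,\gamma}$, in particular $\C3$, diffeomorphism, or a rigid motion at the corners — yields on each $U_i$ a function $f_i\in\C3(U_i;\reali)$ with $f_i=f$ on $U_i\cap\Omega$ and $\norma{f_i}_{\C3(U_i;\reali)}\leq C_i\,\norma{f}_{\C3(\overline\Omega;\reali)}$; for $i=0$ one simply takes $f_0=f$.

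Finally I would glue and cut off: define $\widetilde f=\chi\cdot\sum_{i=0}^N\psi_i f_i$, extended by $0$ outside $W$. On $\Omega$ every $f_i$ equals $f$ and $\sum_i\psi_i=1$, hence $\widetilde f=f$ there; $\widetilde f\in\C3(\reali^2;\reali)$ because it is a finite sum of products of $\C3$ functions on $W$ which vanishes near $\partial W$; and $\widetilde f\equiv 0$ on $\reali^2\setminus\Omega'$ and has compact support because $\spt\chi\subset\Omega'$. The bound $\norma{\widetilde f}_{\C3(\reali^2;\reali)}\leq C\,\norma{f}_{\C3(\overline\Omega;\reali)}$ then follows from the Leibniz rule using the estimates for $E$, for the fixed charts $\Phi_i$, and for the fixed functions $\psi_i$ and $\chi$, the constant $C$ depending only on $\Omega$ and $\Omega'$.

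The one point that is not textbook is the treatment of the corner neighbourhoods: one must verify that iterating the one-dimensional reflection across the two mutually perpendicular straight sides furnished by \ref{omega3} genuinely produces a $\C3$ function near the vertex, with controlled norm. This is precisely where the convex right-angle hypothesis in \ref{omega3}, rather than an arbitrary angle, is used; everything else is the standard reflection-and-partition argument.
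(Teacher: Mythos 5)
Your proposal is correct, and it is in fact more than the paper itself offers: the paper gives no proof of Lemma~\ref{lem:GT}, relying entirely on the citation of \cite[Lemma~6.37]{gilbarg}, which is stated there for domains with $\C{k,\gamma}$ boundary and whose proof is exactly the partition-of-unity plus Hestenes-reflection argument you reconstruct (flattening charts, the coefficients $c_j$ solving the Vandermonde system, gluing with a cutoff supported in $\Omega'$). The genuine difference is your explicit treatment of the corner set $\Gamma_c$: the cited lemma does not literally cover a boundary with corners, so the authors are implicitly using~\ref{omega3} to reduce to a case the classical argument handles, and you make this reduction precise by composing the one-dimensional reflection operators in the two perpendicular directions across the straight sides of the quadrant. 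Your observation that the first reflection produces a function of class $\C3$ up to the remaining closed half-plane, so that the second reflection applies, is the one point that needs saying and you say it; the resulting operator norm bound depends only on the finitely many fixed charts, so the constant $C$ depends only on $\Omega$ and $\Omega'$ as claimed. In short, your argument is the standard one behind the citation, supplemented by the corner analysis that the bare citation leaves implicit.
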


\begin{proofof}{Proposition~\ref{prop:path}}
  First, apply Lemma~\ref{lem:GT} and extend $\phi$ to a
  $\widetilde\phi \in \C3 (\reali^2; \reali)$.

  Define
  $\widetilde w (x) = \mathcal{N}\left(-\nabla \widetilde{\phi}
    (x)\right)$.
  By~\eqref{eq:Nprop}, Lemma~\ref{lem:GT} and
  Proposition~\ref{prop:1},
  $\widetilde w \in \C{0,1} (\reali^2; \reali^2)$.  Hence, for any
  fixed $\hat x \in \reali^2$, the Cauchy problem
  \begin{align}
  \label{eq:tildeW}
      &\dot x
        =
        \widetilde w (x)\;,&
      &x (0) = \hat x
  \end{align}
  admits a unique solution
  $\widetilde p_{\hat x} \colon \reali \to \reali^2$. Define
  \begin{displaymath}
    T_{\hat x}
    =
    \sup
    \left\{
      t \in \reali^+ \colon \widetilde p_{\hat x} ([0,t]) \subset \Omega
    \right\}
    \quad \mbox{ and } \quad
    p_{\hat x} (t)
    =
    \widetilde p_{\hat x} (t)
    \quad \mbox{ for } t \in [0, T_{\hat x}] \;.
  \end{displaymath}
  By construction, the map $p_{\hat x}$ solves \eqref{eq:pat}.  By the
  standard theory of ordinary differential equations, \ref{quark1}
  and~\ref{quark2} are proved.

  We consider now~\ref{quark3}.  Note that~\eqref{eq:tildeW} is
  dissipative in $\Omega$, in the sense that $\widetilde{\phi}$ is a
  (strict) Lyapunov function for~\eqref{eq:tildeW} in $\Omega$, i.e.,
  $\widetilde{\phi}$ decreases along the path $t \to p_{\hat x} (t)$
  as long as $p_{\hat x} (t) \in \Omega$. In fact, as long as
  $p_{\hat x} (t) \in \Omega$
  \begin{displaymath}
    \frac{\d~}{\d{t}} \; \widetilde{\phi} \left(p_{\hat x} (t) \right)
    =
    \frac{\d~}{\d{t}} \; \phi \left(p_{\hat x} (t) \right)
    =
    -
    \left(
      \theta^2
      +
      \norma{\grad \! \phi \left(p_{\hat x} (t) \right)}^2
    \right)^{-1/2}
    \;
    \norma{\grad \! \phi \left(p_{\hat x} (t)\right)}^2 \,,
  \end{displaymath}
  which is strictly negative whenever $\hat x$ is not a critical
  point.  By La Salle Principle~\cite[Theorem~9.22, see also Lemma
  9.21 and Theorem~14.17]{hale_kocak}, as $t$ goes to infinity, every
  bounded path $p_{\hat x}$ that remains in $\Omega$ is attracted
  towards the set of equilibria, i.e., of critical points
  of~\eqref{eq:tildeW}. More precisely, setting
  \begin{align*}
    &\omega(\hat x)
     =
     \left\{
      x\in\reali^2 \colon
      \begin{array}{l}
        \mbox{there exists } (t_n)_{n\in\naturali}
        \mbox{ such that}
        \\
        \lim\limits_{n\to\infty} t_n = \infty \ \mbox{ and }
        \lim\limits_{n\to\infty} p_{\hat x}(t_n) = x
      \end{array}
    \right\},&
    &{\cal E}_D
     =
     \left\{
      x \in D \colon \grad\widetilde{\phi}(x) = 0
      \right\}
      \quad \mbox{for } D\subseteq \reali^2
  \end{align*}
  we proved that if $x \in \omega (\hat x) \cap \Omega$ for a
  $\hat x \in \Omega$, then $\nabla\phi (x) = 0$.

  Note that for any $\hat x \in \Omega$, the path $\tilde p_{\hat x}$
  exiting $\hat x$ does not intersect $\Gamma_w$. Indeed, by the
  boundary condition imposed along $\Gamma_w$ in~\eqref{eq:ell}
  \begin{displaymath}
    \Gamma_w
    =
    \left\{
      x \in \Gamma_w \colon \nabla \phi (x) = 0
    \right\}
    \cup
    \left\{
      x \in \Gamma_w \colon \nabla\phi (x)  \neq 0
      \mbox{ and }
      \nabla\phi (x) \cdot \nu (x) = 0
    \right\} \,.
  \end{displaymath}
  The former set above is clearly invariant, both positively and
  negatively, with respect to~\eqref{eq:tildeW}, hence it can not be
  reached by a path $t \to p_{\hat x} (t)$ starting in $\Omega$. The
  latter consists of trajectories solving~\eqref{eq:tildeW} that are
  entirely contained in $\Gamma_w$, since $w$ is parallel to
  $\Gamma_w$. As a consequence, for any $\hat x \in \Omega$, either
  the path $t \to p_{\hat x} (t)$ crosses $\Gamma_e$, or it stays in
  $\Omega$ and approaches a point in the set
  ${\cal E}_{\overline{\Omega}}$, namely
  $\omega(\hat x) \subseteq {\cal E}_{\overline{\Omega}}$.

  It remains to determine the behaviour of the system near the
  critical points in ${\cal E}_{\overline{\Omega}}$.  We proceed by
  linearisation around $\bar{x}$, with $\nabla\phi (\bar x) = 0$.
  Denote by $A(\bar{x})$ the first order total derivative of
  ${\cal N}(-\nabla\phi)$ computed at
  $\bar{x}\in{\cal E}_{\overline{\Omega}}$. By direct computations,
  \begin{equation}
    \label{eq:uffa}
    A(\bar{x})
    =
    D{\cal N} \left(-\nabla\phi(\bar{x})\right)
    = - \theta^{-1} D^2\phi(\bar{x}) \;,
  \end{equation}
  thanks to $\nabla\phi (\bar x) = 0$. Recall the map $u$ given
  by~\eqref{eq:trans}. Due to~\eqref{eq:11} and~\eqref{eq:uffa} we
  have
  \begin{displaymath}
    A(\bar{x})
    =
    \frac{1}{\theta}\,
    \frac{\delta}{u(\bar{x})} ~ D^2 u(\bar{x}) \;,
  \end{displaymath}
  proving that $A (\bar x)$ is symmetric and diagonalizable.
  By~\ref{it:u6} in Lemma~\ref{lem:eu}, $A (\bar u)$ has at least one
  strictly positive eigenvalue, say $\lambda_2>0$.  Consider now two
  cases, depending on the value attained by the other eigenvalue
  $\lambda_1$:

  \smallskip

  \noindent%
  \textbf{$\star \quad\lambda_1\neq0$:} Then, by Hartman-Grobman
  Theorem, see e.g.~\cite[Theorem~9.35]{hale_kocak}, depending on the
  sign of $\lambda_1$, $\bar{x}$ is either a source or a saddle. In
  both cases, it is an isolated point of
  ${\cal E}_{\overline{\Omega}}$, so that $\bar{x} \in \omega(\hat x)$
  implies $\{\bar{x}\} = \omega(\hat x)$, by the connectedness of
  $\omega(\hat x)$. This is possible only if $\lambda_1 < 0$, i.e.,
  $\bar{x}$ is a saddle, and $\hat x$ belongs to the stable manifold
  consisting of two trajectories entering $\bar x$, which is a set of
  measure zero.

  \smallskip

  \noindent%
  \textbf{$\star \quad\lambda_1=0$:} Then, $\bar{x}$ is not
  necessarily an isolated point of $\mathcal{E}_{\overline \Omega}$.
  We use here the result of Palmer~\cite{palmer} about the local
  central manifold, which is an invariant 1-dimensional set containing
  all possible critical points in a neighborhood of $\bar{x}$. This
  result can be seen as a generalization of the Hartman-Grobman
  Theorem, and gives the instability of the central manifold, see
  also~\cite[\S~4]{aulbach}, \cite[\S~9.2-9.3]{chow_hale},
  \cite[Theorem~10.14]{hale_kocak}.

  Let $B$ be the change of coordinates matrix such that
  $B\, A (\bar x) \, B^{-1}$ is diagonal, with $A (\bar x)$ given
  in~\eqref{eq:uffa}. By means of the linear change of variables
  $y(t) \!=\! B \, \left(p_{\hat x}(t) \!-\! \bar{x}\right)$, the differential
  equation in~\eqref{eq:tildeW} can be written as
  \begin{align}
  \label{eq:f}
    &\dot{y}_1 = f_1(y_1,y_2)\;,&
    &\dot{y}_2 = \lambda_2 \, y_2 + f_2(y_1,y_2) \;,
  \end{align}
  where $f\in\C2(\reali^2; \reali^2)$ is bounded, see
  Lemma~\ref{lem:GT}, and satisfies $f(0) = 0$.  The dependence of
  $B$, $f$ and $\lambda_2$ upon $\bar{x}$ is here neglected.  We
  obtain from~\cite{palmer} that there exist a Lipschitz continuous
  function $h$ and a homeomorphism
  $H \colon \reali^+\times\reali^2\to\reali^2$, such that the graph of
  $h$ is the local central manifold and the map
  $z(t) = H\left(t,y(t) \right)$, with $H (t,0) = 0$, solves
  \begin{align}
  \label{eq:z}
      &\dot{z_1} = f_1(z_1,h(t,z_1))\;,&
      &\dot{z_2} = \lambda_2 \, z_2 \;,
  \end{align}
  provided $y$ solves~\eqref{eq:f}. As a matter of fact, $h$ can be
  proved to be also $\C2$, see~\cite[Proposition~4.1]{aulbach}
  or~\cite[Theorem~10.14]{hale_kocak}.

  Then, by continuity of $H$, there exists $r_0 > 0$ such that if
  $\norma{y(t)}<r_0$, then
  $\modulo{z_2(t)} = \modulo{H_2\left(t,y(t) \right)} <
  \modulo{z_{2}(0)}$.
  Solving the second equation in~\eqref{eq:z}, we obtain that for
  $y(0)$ such that $z_2 (0) = H_2\left(0, y(0) \right)\neq 0$, there
  exists $t_* >0$ such that $\norma{y(t)}>r_0$ for all $t>t_*$.  Going
  back to the original $x$-variable, for any neighborhood ${\cal O}$
  of $\bar x$ with ${\cal O} \subseteq \reali^2$, introduce
  $
    W
    =
    \{
      x \in{\cal O}\colon  H_2\left(0,B(x-\bar{x})\right) = 0
    \} \,.
  $
  We have obtained that if $\!\hat{x}\! \in \!{\cal O}\!\setminus\! W\!$, then
  $\!p_{\hat{x}}(t)\!$ is outside $\!\mathcal{O}\!$ for all $t \!>\! t_*$.  Thus,
  $\!\bar{x}\!$ can be attractive only for the points lying on $W$, which
  is clearly a \!1-dimensional manifold and has 2-dimensional Lebesgue
  measure equal to $\!0$. Moreover, $W\!$ as a whole is repulsive.

  Therefore, $\omega(\hat x)\cap W$ is non-empty only if the path
  passing through $\hat x$ lies inside $W$. Therefore, the
  $1$-dimensional Lebesgue measure of $\omega(\hat x)\cap W$ is $0$.


  Finally, for almost all $\hat{x}$, the path $p_{\hat{x}}(\reali^+)$
  given by~\eqref{eq:tildeW} is not attracted by
  ${\cal E}_{\overline{\Omega}}$, hence it has to reach the exit
  $\Gamma_e$, i.e., there exists a positive finite time $T_{\hat{x}}$
  such that $p_{\hat x}(T_{\hat{x}}) \in \Gamma_e$.
\end{proofof}
\noindent\textbf{Acknowledgment:} The authors were supported by the
INDAM-GNAMPA project \emph{Leggi di conservazione nella
  modellizzazione di dinamiche di aggrega\-zio\-ne}. The last author
was partially supported by ICM, UW.

\def\cprime{$'$}

\end{document}